\documentclass[a4paper,11pt]{article}

\usepackage{amsthm,amsmath,amssymb}
\usepackage[latin1]{inputenc}
\usepackage[english]{babel}
\usepackage{mathscinet}
\usepackage{graphics}
\usepackage{graphicx}
\usepackage[all,cmtip]{xy}
\usepackage{multirow}
\usepackage{hyperref}

\theoremstyle{plain}
\newtheorem{theo}{Theorem}[section]
\newtheorem{prop}[theo]{Proposition}

\newtheorem{lemma}[theo]{Lemma}

\theoremstyle{definition}
\newtheorem{defi}[theo]{Definition}
\newtheorem{ex}[theo]{Example}

\theoremstyle{remark}
\newtheorem{remark}[theo]{Remark}
\newtheorem{nothing}[theo]{\noindent\!\!\bf}

\DeclareMathOperator{\Sing}{Sing}
\DeclareMathOperator{\lcm}{lcm}
\DeclareMathOperator{\tr}{tr}
\DeclareMathOperator{\id}{Id}
\DeclareMathOperator{\Mat}{Mat}

\def\E{\mathcal{E}}
\def\C{\mathbb{C}}
\def\P{\mathbb{P}}
\def\N{\mathbb{N}}
\def\Z{\mathbb{Z}}
\def\Q{\mathbf{Q}}
\def\w{\omega}
\def\l{\ell}

\newcommand\bd{\mathbf{d}}

\newcommand\ba{\mathbf{a}}
\newcommand\bxi{\boldsymbol{\xi}}

\title{\bf Monodromy Zeta Function Formula for Embedded $\Q$-Resolutions}
\author{Jorge Mart\'{\i}n-Morales\footnote{Partially supported by the projects MTM2007-67908-C02-01, ``E15 Grupo Consolidado Geometr\'{\i}a'' from the goverment of Aragón, and FQM-333 from ``Junta de Andalucía''.}}
\date{Centro Universitario de la Defensa - IUMA.\\ Academia General Militar, Ctra.~de Huesca s/n.\\ 50090, Zaragoza, Spain.\\ jorge@unizar.es}

\begin{document}

\maketitle

\begin{abstract}
In a previous work we have introduced the notion of embedded $\Q$-resolution, which essentially consists in allowing the final ambient space to contain abelian quotient singularities. Here we give a generalization of N.~A'Campo's formula for the monodromy zeta function of a singularity in this setting. Some examples of its applications are shown.

\vspace{0.25cm}

\noindent \textit{Keywords:} Quotient singularity, weighted blow-up, embedded $\Q$-reso\-lution, monodromy zeta function.

\vspace{0.2cm}

\noindent \textit{MSC 2000:} 32S25, 32S45.
\end{abstract}

\section*{Introduction}

In Singularity Theory, resolution is one of the most important tools. In the embedded case, the starting point is a singular hypersurface. After a sequence of
suitable blow-ups this hypersurface is replaced by a long list of smooth hypersurfaces (the strict transform and the exceptional divisors) which intersect in the simplest way (at any point one sees coordinate hyperplanes for suitable local coordinates). This process can be very expensive from the computational point of view and, moreover, only a few amount of the obtained data is used for the understanding of the singularity.

The experimental work shows that most of these data can be recovered if one allows some mild singularities to survive in the process (the quotient singularities). These partial resolutions, called \emph{embedded ${\Q}$-resolutions}, can be obtained as a sequence of weighted blow-ups and their computational complexity is extremely lower compared with standard resolutions. Moreover, the process is optimal in the sense that no useless data are obtained.

To do this, in~\cite{AMO11a}, we proved that Cartier and Weil divisors agree on $V$-manifolds. This allows one to develop a rational intersection theory on varieties with quotient singularities and study weighted blow-ups at points, see~\cite{AMO11b}. By using these tools we were able to get a big amount of information about the singularity.

In this paper we continue our study about $\Q$-resolutions. In particular, the behavior of the Lefschetz numbers and the zeta function of the monodromy with respect to an embedded $\Q$-resolution is investigated. These two invariants have already been studied in different contexts by several authors. Hence before going into details, let us recall some of those approaches.

Let $f:(\C^{n+1},0)\to (\C,0)$ be a germ of a non-constant analytic function and let $(H,0)$ be the hypersurface singularity defined by $f$. Consider $F = \{ x\in\C^{n+1} \,:\, ||x|| \leq \varepsilon,\, f(x) =~\!\!\eta \}$ ($0 < \eta << \varepsilon$, $\varepsilon$ small enough) the Milnor fiber and $h: F \to F$ the corresponding geometric monodromy. The induced automorphisms on the complex cohomology groups are denoted by $H^q(h) := h\,:\, H^q(F, \C) \to H^q(F, \C)$.

In~\cite{ACampo75}, A'Campo gives a method for computing the Lefschetz numbers of the iterates $h^k := h\, \circ \cdots \circ\, h$ of the geometric monodromy, defined by
$$
  \Lambda(h^k) := \sum_{q\geq 0} (-1)^q \tr H^q(h^k),
$$
in terms of an embedded resolution of the singularity $(H,0) \subset (\C^{n+1},0)$. These Lefschetz numbers are related to the monodromy zeta function
$$
  Z (f):= \prod_{q\geq 0} \det ( \id^{*} - t H^q(h) )^{(-1)^q}
$$
by the following well-known formula
\begin{equation}\label{zeta_numLef}
  Z (f) = \exp \Bigg(- \sum_{k \geq 1} \Lambda(h^k) \frac{t^k}{k} \Bigg).
\end{equation}
Using this relationship he derives a new expression for $Z(f)$. More precisely, let $\pi: X \to (\C^{n+1},0)$ be an embedded resolution of $(H,0)$. Consider
$$
  \pi^{*}(H) = \widehat{H} + \sum_{i=1}^r m_i E_i,
$$
the total transform of $H$, where $\widehat{H}$ is the strict transform of $H$ and $E_1,\ldots,E_r$ are the irreducible components of the exceptional divisor $\pi^{*}(0)$. Now, define
$$
  \check{E}_i := E_i \setminus \Bigg(E_i \cap \Big( \bigcup_{j\neq i} E_j \cup \widehat{H} \Big) \Bigg).
$$

Then, the Lefschetz numbers and the complex monodromy zeta function are given by
$$
  \Lambda(h^k) = \sum_{i=1,\ m_i|k}^r m_i\chi ( \check{E}_i ),\qquad
  Z(f) = \prod_{i=1}^r (1-t^{m_i})^{\chi( \check{E}_i )}.
$$
Thus the Euler characteristic of the Milnor fiber is $\chi(F) = \sum_{i=1}^r m_i \chi( \check{E}_i )$.

When $(H,0)$ defines an isolated singularity, both the characteristic polynomial of the monodromy $\Delta(t)$ and the Milnor number $\mu = \dim H^n(F,\C) = \deg \Delta(t)$ can be obtained from the zeta function as follows,
$$
  \Delta(t) = \left[ \frac{1}{t-1} \prod_{i=1}^r ( t^{m_i} - 1 )^{\chi( \check{E}_i )} \right]^{(-1)^n}, \quad \mu = (-1)^n \Big[ -1 + \sum_{i=1}^r m_i \chi( \check{E}_i ) \Big],
$$
and, in particular, $\mu = (-1)^n [ -1 + \chi(F) ]$ holds.

\vspace{0.25cm}

Another contribution in the same direction can be found in~\cite{GLM97}, where the authors give a generalization of A'Campo's formula for the monodromy zeta function via partial resolutions, that is, the map $\pi: X \to (\C^{n+1},0)$ is assumed to be just a modification (i.e.~the condition about normal crossing divisor in the embedded resolution is removed). Also Dimca, using the machinery of constructible sheaves, proved the same result allowing $X$ to be an arbitrary analytic space~\cite[Th.~6.1.14.]{Dimca04}.

\vspace{0.25cm}

The aim of this paper is to generalize all the results above, giving the corresponding A'Campo's formula and the Lefschetz numbers in terms of an embedded $\Q$-resolution, see Theorem~\ref{ATH} below. Note that Veys has already considered this problem for plane curve singularities~\cite{Veys97}.

\vspace{0.25cm}

Our plan is as follows. In \S\ref{prelim} some well-known preliminaries about quotient singularities and embedded $\Q$-resolutions are presented. The main result, i.e.~the generalization of A'Campo's formula in out setting, is stated and proven in \S\ref{mainTH} after having computed the monodromy zeta function of a divisor with $\mathbb{Q}$-normal crossings. In \S\ref{app_ex} weighted blow-ups are used to compute embedded $\Q$-resolutions in several examples, including a Yomdin-L\^{e} surface singularity, so as to apply the formula obtained. As a further application, the monodromy zeta function for not-well-defined functions giving rise to a zero set is introduced in \S\ref{not-well-defined}. Finally, in \S\ref{sec_non-abelian} it is exemplified the different behavior of A'Campo's formula using non-abelian groups, showing that ``double points'' in an embedded resolution may contribute to the monodromy zeta function.

\vspace{0.25cm}

As for notation, from now on and depending on the context, we shall denote the monodromy zeta function by $Z(f)$, $Z(f)(t)$, $Z(f;t)$, $Z_f(t)$ or $Z(t)$, interchangeably. The same applies for the Lefschetz numbers and the
characteristic polynomial.


\begin{center}
\begin{large}
\textbf{Acknowledgments}
\end{large}
\end{center}

This is part of my PhD thesis. I am deeply grateful to my advisors Enrique Artal and José Ignacio Cogolludo for supporting me continuously with their fruitful conversations and ideas.

This work was mainly written in Nice (France). I wish to express my gratitude to Alexandru Dimca for his insightful comments and discussions and to one of his students, Hugues Zuber, and all the members of the ``Laboratoire J.A. Dieudonné'' who made my stay more pleasant.

The author is partially supported by the Spanish projects MTM2010-2010-21740-C02-02, ``E15 Grupo Consolidado Geometría'' from the government of Aragón, and FQM-333 from ``Junta de Andalucía''.

\section{Preliminaries}\label{prelim}


Let us sketch some definitions and properties about $V$-manifolds, weighted projective spaces, embedded $\Q$-resolutions, and weighted blow-ups, see~\cite{AMO11a} and~\cite{AMO11b} for a more detailed exposition.

\subsection{$V$-manifolds and quotient singularities}

\begin{defi}
A $V$-manifold of dimension $n$ is a complex analytic space which admits an open
covering $\{U_i\}$ such that $U_i$ is analytically isomorphic to $B_i/G_i$ where
$B_i \subset \C^n$ is an open ball and $G_i$ is a finite subgroup of $GL(n,\C)$.
\end{defi}
 
The concept of $V$-manifolds was introduced in~\cite{Satake56} and they have the same homological
properties over $\mathbb{Q}$ as manifolds. For instance, they admit a Poincar{\'e}
duality if they are compact and carry a pure Hodge structure if they are compact
and Kähler, see~\cite{Baily56}. They have been classified locally by
Prill~\cite{Prill67}. It is enough to consider the so-called \emph{small subgroups} $G\subset GL(n,\C)$, i.e.~without
rotations around hyperplanes other than the identity.

\begin{theo}\label{th_Prill}{\rm (\cite{Prill67}).}~Let $G_1$, $G_2$ be small
subgroups of $GL(n,\C)$. Then $\C^n/G_1$ is isomorphic to $\C^n/G_2$ iff $G_1$ and $G_2$ are conjugate subgroups. $\hfill \Box$
\end{theo}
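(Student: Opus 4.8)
The statement to prove is that for small subgroups $G_1, G_2 \subset GL(n,\C)$, the quotients $\C^n/G_1$ and $\C^n/G_2$ are isomorphic as germs (at the images of $0$) if and only if $G_1$ and $G_2$ are conjugate in $GL(n,\C)$. Since one implication is immediate — a conjugation $A G_1 A^{-1} = G_2$ induces an isomorphism $\C^n/G_1 \to \C^n/G_2$ via the linear map $A$ — the content is the converse, and the plan is to recover the group from the analytic structure of the quotient.

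\medskip

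The plan is as follows. First I would fix a biholomorphic germ $\varphi \colon (\C^n/G_1, 0) \to (\C^n/G_2, 0)$ and lift it. The key geometric input is that the quotient map $p_i \colon \C^n \to \C^n/G_i$ realizes $\C^n$ as the \emph{universal cover} of a punctured neighbourhood: because $G_i$ is small, the action is free outside a subvariety of codimension $\geq 2$, so $\C^n \setminus (\text{fixed locus}) \to (\C^n/G_i) \setminus p_i(\text{fixed locus})$ is an unramified Galois cover with group $G_i$, and the base has the same fundamental group as the smooth locus. Hence $G_i$ is recovered as a deck transformation group, equivalently as a quotient of $\pi_1$ of the smooth locus of the germ. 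Pulling $\varphi$ back along the two covers and using that $\C^n$ (minus a codimension-$\geq 2$ set) is simply connected, one obtains a lift $\tilde\varphi \colon (\C^n,0) \to (\C^n,0)$, a biholomorphic germ, which conjugates the $G_1$-action into the $G_2$-action: $\tilde\varphi \circ g \circ \tilde\varphi^{-1} \in G_2$ for every $g \in G_1$, giving a group isomorphism $\theta \colon G_1 \to G_2$ with $\tilde\varphi(gx) = \theta(g)\tilde\varphi(x)$.

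\medskip

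Second, I would linearize. The germ $\tilde\varphi$ need not be linear, but its differential $A := d\tilde\varphi_0 \in GL(n,\C)$ is; differentiating the relation $\tilde\varphi(gx) = \theta(g)\tilde\varphi(x)$ at $x = 0$ and using $\tilde\varphi(0) = 0$ gives $A g = \theta(g) A$ for all $g \in G_1$ (here $g$, $\theta(g)$ are the linear actions), i.e. $A G_1 A^{-1} = \theta(G_1) = G_2$. This is the desired conjugation. The one subtlety is ensuring $\tilde\varphi(0)=0$, i.e. that the lift sends the distinguished point to the distinguished point; this follows because $\varphi$ is a germ at the images of the origins and those are the unique points of the respective quotients whose local fundamental group is all of $G_i$ (again using smallness, the stabilizer of $0$ is the whole group while generic points have trivial stabilizer).

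\medskip

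The main obstacle is the lifting step: one must argue carefully that a biholomorphism of the singular germs lifts to a biholomorphism of the covers. The clean way is to restrict to the complements of the branch loci — which are analytic of codimension $\geq 2$, hence do not change $\pi_1$ of a ball or of its image — apply the lifting criterion for covering spaces to get a biholomorphism of the punctured spaces intertwining the group actions, and then invoke Riemann's extension / Hartogs to extend it across the codimension-$\geq 2$ set to a biholomorphic germ $\tilde\varphi$ of $(\C^n,0)$. Smallness of $G_i$ is used three times — to make the branch locus codimension $\geq 2$, to identify the origin intrinsically, and (implicitly) so that the reconstruction of $G_i$ from the quotient is unambiguous. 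Since this is a classical result, in the paper I would simply cite \cite{Prill67} and omit the argument; the sketch above is the proof one would give if pressed.
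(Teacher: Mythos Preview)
The paper gives no proof of this theorem at all --- the $\Box$ appears immediately after the statement, and the result is simply attributed to \cite{Prill67}. Your final remark anticipates this exactly: in the paper one cites Prill and moves on. Your sketch of the underlying argument (lift the germ isomorphism through the branched covers using that smallness forces the ramification locus to have codimension~$\geq 2$, then linearize via the differential at the origin) is correct and is the standard proof.
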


\begin{nothing}\label{notation_action}
For $\bd: = {}^t(d_1 \ldots d_r)$ we denote by
$\mu_{\bd} := \mu_{d_1} \times \cdots \times \mu_{d_r}$ a finite
abelian group written as a product of finite cyclic groups, that is, $\mu_{d_i}$
is the cyclic group of $d_i$-th roots of unity in $\C$. Consider a matrix of weight
vectors 
\begin{align*}
A & := (a_{ij})_{i,j} = [\ba_1 \, | \, \cdots \, | \, \ba_n ] \in \Mat (r \times n, \Z), \\ 
\ba_j & := {}^t (a_{1 j}\dots a_{r j}) \in \Mat(r\times 1,\Z),
\end{align*}
and the action
\begin{equation*}
\begin{array}{cr}
( \mu_{d_1} \times \cdots \times \mu_{d_r} ) \times \C^n  \longrightarrow  \C^n,&\bxi_\bd = (\xi_{d_1}, \ldots, \xi_{d_r}), \\[0.15cm]
\big( \bxi_{\bd} , \mathbf{x} \big) \mapsto (\xi_{d_1}^{a_{11}} \cdot\ldots\cdot
\xi_{d_r}^{a_{r1}}\, x_1,\, \ldots\, , \xi_{d_1}^{a_{1n}}\cdot\ldots\cdot
\xi_{d_r}^{a_{rn}}\, x_n ), & \mathbf{x} = (x_1,\ldots,x_n).
\end{array}
\end{equation*}

Note that the $i$-th row of the matrix $A$ can be considered modulo $d_i$. The
set of all orbits $\C^n / G$ is called ({\em cyclic}) {\em quotient space of
type $(\bd;A)$} and it is denoted by
$$
  X(\bd; A) := X \left( \begin{array}{c|ccc} d_1 & a_{11} & \cdots & a_{1n}\\
\vdots & \vdots & \ddots & \vdots \\ d_r & a_{r1} & \cdots & a_{rn} \end{array}
\right).
$$
The orbit of an element $\mathbf{x}\in \C^n$ under this action is denoted
by $[\mathbf{x}]_{(\bd; A)}$ and the subindex is omitted if no ambiguity
seems likely to arise. Using multi-index notation
the action takes the simple form
\begin{eqnarray*}
\mu_\bd \times \C^n & \longrightarrow & \C^n, \\
(\bxi_\bd, \mathbf{x}) & \mapsto & \bxi_\bd\cdot\mathbf{x}:=(\bxi_\bd^{\ba_1}\, x_1, \ldots, \bxi_{\bd}^{\ba_n}\, x_n).
\end{eqnarray*}
\end{nothing}

The quotient of $\C^n$ by a finite abelian group is always isomorphic to a quotient space of type $(\bd;A)$, see~\cite{AMO11a} for a proof of this classical result. Different types $(\bd;A)$ can give rise to isomorphic quotient spaces.

\begin{ex}\label{Ex_quo_dim1} When $n=1$ all spaces $X(\bd;A)$ are
isomorphic to~$\C$. It is clear that we can assume that $\gcd(d_i, a_{i})=1$.
If $r=1$, the map $[x] \mapsto x^{d_1}$ gives an isomorphism between $X(d_1; a_{1})$ and
$\C$. 

Consider the case $r=2$. Note that
$\C/(\mu_{d_1} \times \mu_{d_2})$ equals $(\C/\mu_{d_1})/ \mu_{d_2}$.
Using the previous isomorphism, it is isomorphic to $X(d_2, d_1 a_2)$,
which is again isomorphic to $\C$. By induction, we obtain the result for any~$r$.
\end{ex}

If an action is not free on $(\C^{*})^n$ we can factor the group by the kernel of the action and
the isomorphism type does not change. This motivates the following definition.

\begin{defi}\label{def_normalized_XdA}
The type $(\bd;A)$ is said to be {\em normalized} if the action is free on $(\C^{*})^n$
and~$\mu_\bd$ is small as subgroup of $GL(n,\C)$.
By abuse of language we often say the space $X(\bd;A)$ is written in a normalized
form when we mean the type $(\bd;A)$ is normalized.
\end{defi}

\begin{prop}
The space $X(\bd;A)$ is written in a normalized form if and only if the stabilizer
subgroup of $P$ is trivial for all~$P \in \C^n$ with exactly $n-1$ coordinates
different from zero.

In the cyclic case the stabilizer of a point as above (with exactly $\,n-1$
coordinates different from zero) has order $\gcd(d, a_1, \ldots, \widehat{a}_i,
\ldots, a_n)$.
\end{prop}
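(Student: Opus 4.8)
The plan is to prove both assertions by analyzing the stabilizer of a generic point on each coordinate hyperplane, and reconciling this with the two normalization conditions (freeness on $(\C^*)^n$ and smallness).

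First I would recall what each condition on $(\bd;A)$ controls. Freeness of the $\mu_\bd$-action on the open torus $(\C^*)^n$ is equivalent to saying that the only group element fixing a point with all coordinates nonzero is the identity; this is a condition ``away from the coordinate hyperplanes''. Smallness of $\mu_\bd$ as a subgroup of $GL(n,\C)$ means that no nonidentity element acts as a (pseudo-)reflection, i.e.\ fixes a hyperplane pointwise. Since the action is diagonal in the given coordinates, an element $\bxi_\bd$ acts trivially on the hyperplane $\{x_i = 0\}$ precisely when $\bxi_\bd^{\ba_j} = 1$ for all $j \neq i$; equivalently, $\bxi_\bd$ fixes pointwise every point $P$ with exactly the $i$-th coordinate equal to zero, which is the same as fixing one such generic $P$ (since the torus $\{x_i=0,\ x_j\neq 0\ (j\neq i)\}$ is connected and the stabilizers are locally constant). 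So the stabilizer of a generic point $P$ on $\{x_i=0\}$ is exactly the subgroup of pseudo-reflections fixing that hyperplane. The forward direction is then immediate: if $(\bd;A)$ is normalized, smallness forces every such stabilizer to be trivial.

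For the converse I would argue contrapositively. Suppose that for every $i$ and every $P$ with exactly $n-1$ nonzero coordinates the stabilizer is trivial. I must show the action is free on $(\C^*)^n$ and $\mu_\bd$ is small. Smallness is essentially the definition restated, as noted above, so the real content is freeness on $(\C^*)^n$: I need to deduce, from ``no nonidentity element fixes a coordinate hyperplane pointwise'', that ``no nonidentity element fixes a single torus point''. The key point is that a diagonal element $\bxi_\bd$ fixing a point $\mathbf{x}\in(\C^*)^n$ must satisfy $\bxi_\bd^{\ba_j}=1$ for \emph{every} $j=1,\dots,n$; this is a stronger system than fixing a hyperplane, which only demands it for $n-1$ of the $j$'s. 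So if such a nonidentity $\bxi_\bd$ existed, it would in particular fix every $\{x_i=0\}$ hyperplane pointwise, contradicting the hypothesis (applied to any single $i$). Hence the action is free on $(\C^*)^n$, and together with smallness $(\bd;A)$ is normalized.

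For the cyclic case ($r=1$, $\mu_\bd = \mu_d$, and $A$ is a single row $(a_1,\dots,a_n)$), I would compute the stabilizer of a point $P$ with $x_i=0$ and $x_j\neq0$ for $j\neq i$ explicitly: $\xi_d\in\mu_d$ fixes $P$ iff $\xi_d^{a_j}=1$ for all $j\neq i$. Writing $\xi_d = e^{2\pi\sqrt{-1}\,k/d}$, this says $d \mid k\,a_j$ for all $j\neq i$, i.e.\ $\frac{d}{\gcd(d,a_j)} \mid k$ for all $j\neq i$, i.e.\ $\operatorname{lcm}_{j\neq i}\!\big(\tfrac{d}{\gcd(d,a_j)}\big)\mid k$. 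The number of such $k$ modulo $d$ is $d \big/ \operatorname{lcm}_{j\neq i}\!\big(\tfrac{d}{\gcd(d,a_j)}\big)$, and a short arithmetic manipulation identifies this with $\gcd(d,a_1,\dots,\widehat{a}_i,\dots,a_n)$. (One clean way: $\operatorname{lcm}_{j}\tfrac{d}{\gcd(d,a_j)} = \tfrac{d}{\gcd_j(\gcd(d,a_j))} = \tfrac{d}{\gcd(d,a_j : j)}$, using that $\operatorname{lcm}$ and $\gcd$ interchange under $x\mapsto d/x$ on divisors of $d$; then divide $d$ by this.) I expect this $\gcd$ bookkeeping to be the only genuinely fiddly part; the structural equivalence in the first assertion is essentially a translation between two standard ways of phrasing the same nonreflection/freeness condition, and the main ``obstacle'' is just being careful that ``fixing a generic point of a hyperplane'' and ``fixing the hyperplane pointwise'' coincide, which holds because the action is linear and diagonal.
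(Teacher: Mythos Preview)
The paper states this proposition without proof (it is imported from the companion reference \cite{AMO11a}), so there is no in-paper argument to compare against. Your plan is correct and would give a complete proof; the identification of the stabilizer of a point with exactly one vanishing coordinate with the elements fixing the corresponding coordinate hyperplane pointwise is the right observation, and the cyclic computation via the duality $x\mapsto d/x$ between $\lcm$ and $\gcd$ on divisors of $d$ is correct.

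One small imprecision in the forward direction: you say that ``smallness forces every such stabilizer to be trivial'', but strictly both hypotheses are needed. An element $\bxi_\bd$ stabilizing $P$ with $x_i=0$ and $x_j\neq 0$ for $j\neq i$ acts on $\C^n$ as the diagonal matrix with $j$-th entry $1$ for $j\neq i$ and $i$-th entry $\bxi_\bd^{\ba_i}$. If $\bxi_\bd^{\ba_i}\neq 1$ this is a pseudo-reflection and smallness rules it out; but if $\bxi_\bd^{\ba_i}=1$ it is the identity of $GL(n,\C)$, and you then need freeness on $(\C^{*})^n$ (equivalently, faithfulness of $\mu_\bd\to GL(n,\C)$) to conclude that $\bxi_\bd$ is the identity of $\mu_\bd$. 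You use both conditions correctly in the converse, so this is only a wording issue rather than a genuine gap.
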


It is possible to convert general types~$(\bd;A)$ into their normalized form.
Theorem~\ref{th_Prill} allows one to decide whether two quotient spaces are
isomorphic. In particular, one can use this result to compute the singular points
of the space $X(\bd;A)$.
In Example~\ref{Ex_quo_dim1} we have explained this normalization process in dimension one.
The two and three-dimensional cases are treated in the following examples. 

\begin{ex}\label{X2}
All quotient spaces for $n=2$ are cyclic. The space $X(d;a,b)$ is written in a
normalized form if and only if $\gcd(d,a) = \gcd(d,b) = 1$. If this is not the
case, one uses the isomorphism\footnote{The notation $(i_1,\ldots,i_k) =
\gcd(i_1,\ldots,i_k)$ is used in case of complicated or long formulas.} (assuming
$\gcd(d,a,b)=1$) $X(d; a,b) \rightarrow X \big( \frac{d}{(d,a)(d,b)}; \frac{a}{(d,a)},
\frac{b}{(d,b)} \big)$, $[ (x,y) ] \mapsto [ (x^{(d,b)},y^{(d,a)}) ]$
to convert it into a normalized one.
\end{ex}

\begin{ex}\label{Ex_dim3} The quotient space $X(d;a,b,c)$ is
written in a normalized form if and only if $\gcd(d,a,b) = \gcd(d,a,c) =
\gcd(d,b,c) = 1$. As above, isomorphisms of the form $[(x,y,z)] \mapsto
[(x,y,z^k)]$ can be used to convert types $(d;a,b,c)$ into their normalized
form.
\end{ex}

In~\cite{Fujiki74} the author computes resolutions of cyclic quotient
singularities. In the two-dimensional case, the resolution process is due to Jung and Hirzebruch,
see~\cite{HNK71}.

\subsection{Weighted projective spaces}

The main reference that has been used in this section is \cite{Dolgachev82}.
Here we concentrate our attention on describing the analytic structure and singularities.

Let $\w:=(q_0,\ldots,q_n)$ be a weight vector, that is, a finite set of coprime positive
integers. 
There is a natural action of the multiplicative group $\C^{*}$ on
$\C^{n+1}\setminus\{0\}$ given by
$$
  (x_0,\ldots,x_n) \longmapsto (t^{q_0} x_0,\ldots,t^{q_n} x_n).
$$

The set of orbits $\frac{\C^{n+1}\setminus\{0\}}{\C^{*}}$ 
under this action is denoted by $\P^n_\w$ (or $\P^n(\w)$ in case of complicated weight vectors) 
and it is called the {\em weighted projective space} of type $\w$. 
The class of a nonzero element $(x_0,\ldots,x_n)\in \C^{n+1}$ 
is denoted by $[x_0:\ldots:x_n]_\w$ and the weight vector is omitted if no ambiguity seems likely to arise.
When $(q_0,\ldots,q_n)=(1,\ldots,1)$ one obtains the usual projective space
and the weight vector is always omitted. For $\mathbf{x}\in\C^{n+1}\setminus\{0\}$,
the closure of $[\mathbf{x}]_\w$ in $\C^{n+1}$ is obtained by adding the origin and it
is an algebraic curve.

\begin{nothing}\label{analytic_struc_Pkw}
\textbf{Analytic structure.} Consider the decomposition
$
\P^n_\w = U_0 \cup \cdots \cup U_n,
$
where $U_i$ is the open set consisting of all elements $[x_0:\ldots:x_n]_\w$
with $x_i\neq 0$. The map
$$
  \widetilde{\psi}_0: \C^n \longrightarrow U_0,\quad
\widetilde{\psi}_0(x_1,\cdots,x_n):= [1:x_1:\ldots:x_n]_\w
$$
defines an isomorphism $\psi_0$ if we replace $\C^n$ by
$X(q_0;\, q_1,\ldots,q_n)$.
Analogously, $X(q_i;\,q_0,\ldots,\widehat{q}_i,\ldots,q_n) \cong U_i$
under the obvious analytic map.
\end{nothing}

\begin{prop}[\cite{AMO11a}]\label{propPw}
Let $d_i := \gcd (q_0,\ldots,\widehat{q}_i,\ldots,q_n)$,
 $e_i:= d_0\cdot\ldots\cdot\widehat{d}_i\cdot\ldots\cdot d_n$
and $p_i:=\frac{q_i}{e_i}$. The following map is an isomorphism:
$$
\begin{array}{rcl}
\P^n \big(q_0,\ldots,q_n\big) & \longrightarrow & \P^n(p_0,\dots,p_n), \\[0.15cm]
\,[x_0:\ldots:x_n] & \mapsto &
\big[\,x_0^{d_0}:\ldots:x_n^{d_n}\,\big].
\end{array}
$$
\end{prop}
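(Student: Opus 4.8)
\emph{Proof plan.} The idea is to present the map in the statement as the morphism induced on quotients when one divides $\C^{n+1}\setminus\{0\}$ by \emph{both} the weight-$\w$ action of $\C^{*}$ and by a suitable finite abelian group: the finite group will turn out to be ``already contained'' in the $\C^{*}$-action, so adjoining it changes nothing, whereas dividing by it \emph{first} converts the weight-$\w$ action into the weight-$(p_0,\dots,p_n)$ one. Before doing this I would settle the elementary arithmetic that makes the target well defined: the $d_i$ are pairwise coprime and each $d_i$ is coprime to $q_i$ --- in both cases a common prime factor would divide \emph{every} $q_k$, contradicting $\gcd(q_0,\dots,q_n)=1$; since $d_j\mid q_i$ for all $j\neq i$ and the $d_j$ ($j\neq i$) are pairwise coprime, their product $e_i$ divides $q_i$, so $p_i=q_i/e_i\in\N_{>0}$; and $\gcd(p_0,\dots,p_n)=1$ because a common prime factor of all the $p_i$ divides $q_i=p_ie_i$ for every $i$. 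Along the way one records the identity $q_id_i=p_iD$ for all $i$, where $D:=d_0\cdots d_n$.

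The heart of the argument --- and the step I expect to be the main obstacle, since this is exactly where the divisibility bookkeeping must be handled with care --- is the following claim: viewed as subgroups of $(\C^{*})^{n+1}$, the finite group $G:=\mu_{d_0}\times\cdots\times\mu_{d_n}$ acting coordinatewise is contained in the one-parameter subgroup $T:=\{(t^{q_0},\dots,t^{q_n}):t\in\C^{*}\}$, which is isomorphic to $\C^{*}$ precisely because $\gcd(q_0,\dots,q_n)=1$. It is enough to check that the generator of $G$ having a primitive $d_i$-th root of unity in the $i$-th slot and $1$ elsewhere lies in $T$: using $d_i\mid q_j$ for $j\neq i$ together with $\gcd(d_i,q_i)=1$, one produces $t\in\mu_{d_i}$ with $t^{q_i}$ equal to that root of unity and $t^{q_j}=1$ for all $j\neq i$.

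Granting the claim, I would compute $(\C^{n+1}\setminus\{0\})/T$ in two ways. By definition it equals $\P^n_\w$, and since $G\subset T$ adjoining $G$ to the acting group leaves the quotient unchanged. On the other hand, divide first by $G$: the map $(x_0,\dots,x_n)\mapsto(x_0^{d_0},\dots,x_n^{d_n})$ identifies $(\C^{n+1}\setminus\{0\})/G$ with $\C^{n+1}\setminus\{0\}$, and under this identification the residual action of $T\cong\C^{*}$ reads $(y_i)\mapsto((t^{D})^{p_i}y_i)_i$ by the identity $q_id_i=p_iD$; since $t\mapsto t^{D}$ is surjective onto $\C^{*}$, this is, after reparametrisation, the weight-$(p_0,\dots,p_n)$ action, whose quotient is $\P^n(p_0,\dots,p_n)$. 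Matching the two descriptions of $(\C^{n+1}\setminus\{0\})/T$ gives an isomorphism $\P^n_\w\cong\P^n(p_0,\dots,p_n)$, and chasing a representative $x$ through $(x_i)\mapsto(x_i^{d_i})$ shows it is precisely $[x_0:\dots:x_n]\mapsto[x_0^{d_0}:\dots:x_n^{d_n}]$. The same computation can be run chart by chart on the $U_i$ of~\ref{analytic_struc_Pkw}, where it becomes an isomorphism of cyclic quotient spaces $X(q_i;q_0,\dots,\widehat{q_i},\dots,q_n)\cong X(p_i;p_0,\dots,\widehat{p_i},\dots,p_n)$, the arithmetic input being identical.
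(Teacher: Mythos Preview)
The paper does not actually prove this proposition --- it is quoted from \cite{AMO11a} without argument --- so there is no in-paper proof to compare against. Your approach is correct and is in fact the standard one (cf.\ Dolgachev \cite{Dolgachev82}, already cited in the paper): the arithmetic preliminaries (pairwise coprimeness of the $d_i$, $\gcd(d_i,q_i)=1$, $e_i\mid q_i$, and the identity $q_id_i=p_iD$) hold for exactly the reasons you indicate; the inclusion $G\subset T$ follows since for $t\in\mu_{d_i}$ one has $t^{q_j}=1$ for $j\neq i$ (as $d_i\mid q_j$) and $t\mapsto t^{q_i}$ is a bijection on $\mu_{d_i}$ (as $\gcd(d_i,q_i)=1$); and the two-step quotient then identifies $\P^n_\w$ with $\P^n(p_0,\dots,p_n)$ via the stated map. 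One small point worth making explicit in a final write-up is that $G$, viewed inside $T\cong\C^{*}$, is precisely the subgroup $\mu_D$ (this uses $\gcd(p_0,\dots,p_n)=1$), so that $T/G\cong\C^{*}$ via $t\mapsto t^D$ and the residual action really is the full weight-$(p_0,\dots,p_n)$ action rather than a proper subgroup of it.
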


\begin{remark}
Note that, due to the preceding proposition, one can always assume the weight
vector satisfies $\gcd(q_0,\ldots,\widehat{q}_i,\ldots,q_n)=1$, for
$i=0,\ldots,n$. In particular, $\P^1{(q_0,q_1)} \cong \P^1$ and for $n=2$ we can
take $(q_0,q_1,q_2)$ pairwise relatively prime numbers. In higher dimension the situation
is a bit more complicated.
\end{remark}

\subsection{Embedded $\Q$-resolutions}

Classically an embedded resolution of $\{f=0\} \subset \C^n$ is a proper map
$\pi: X \to (\C^n,0)$ from a smooth variety $X$ satisfying, among other
conditions, that $\pi^{-1}(\{f=0\})$ is a normal crossing divisor. To weaken the
condition on the preimage of the singularity we allow the new ambient space $X$
to contain abelian quotient singularities and the divisor $\pi^{-1}(\{f=0\})$ to
have \emph{normal crossings} over this kind of varieties. This notion of normal
crossing divisor on $V$-manifolds was first introduced by Steenbrink
in~\cite{Steenbrink77}.

\begin{defi}
Let $X$ be a $V$-manifold with abelian quotient singularities. A hypersurface
$D$ on $X$ is said to be with {\em $\mathbb{Q}$-normal crossings} if it is
locally isomorphic to the quotient of a union of coordinate hyperplanes under a group
action of type $(\bd;A)$. That is, given $x \in X$, there is an isomorphism of
germs $(X,x) \simeq (X(\bd;A), [0])$ such that $(D,x) \subset (X,x)$ is identified
under this morphism with a germ of the form
$$
\big( \{ [\mathbf{x}] \in X(\bd;A) \mid x_1^{m_1} \cdot\ldots\cdot x_k^{m_k} = 0 \},
[(0,\ldots,0)] \big).
$$
\end{defi}

Let $M = \C^{n+1} / \mu_\bd$ be an abelian quotient space not necessarily cyclic
or written in normalized form. Consider $H \subset M$ an analytic subvariety of codimension one.

\begin{defi}\label{Qresolution}
An {\em embedded $\Q$-resolution} of $(H,0) \subset (M,0)$ is a proper analytic
map~$\pi: X \to (M,0)$ such that:
\begin{enumerate}
\item $X$ is a $V$-manifold with abelian quotient singularities.
\item $\pi$ is an isomorphism over $X\setminus \pi^{-1}(\Sing(H))$.
\item $\pi^{-1}(H)$ is a hypersurface with $\mathbb{Q}$-normal crossings on $X$.
\end{enumerate}
\end{defi}

\begin{remark}
Let $f:(M,0) \to (\C,0)$ be a non-constant analytic function germ. Consider
$(H,0)$ the hypersurface defined by $f$ on $(M,0)$. Let $\pi: X \to (M,0)$ be an
embedded $\Q$-resolution of $(H,0) \subset (M,0)$. Then $\pi^{-1}(H) = (f\circ
\pi)^{-1}(0)$ is locally given by a function of the form
$
x_1^{m_1}  \cdot\ldots\cdot x_k^{m_k} : X(\bd;A) \rightarrow \C.
$ 
\end{remark}

\subsection{Weighted blow-ups}

Weighted blow-ups can be defined in any dimension, see~\cite{AMO11a, AMO11b}. In this
section, we restrict our attention to the case $n=2$ and $n=3$.

\begin{nothing}\label{blowup_dimN_smooth}\textbf{Classical blow-up of $\C^{2}$.}
We consider
$$
\widehat{\C}^{2} := \big\{ ((x,y),[u:v]) \in \C^{2} \times \P^1 \mid
(x,y)\in \overline{[u:v]} \big\}.
$$
Then $\pi : \widehat{\C}^{2} \to \C^{2}$ is an isomorphism over
$\widehat{\C}^{2} \setminus \pi^{-1}(0)$. The {\em exceptional divisor} $E:=
\pi^{-1}(0)$ is identified with $\P^{1}$. The space
$\widehat{\C}^{2} = U_0 \cup U_1$ can be covered by $2$
charts each of them isomorphic to $\C^2$. For instance, the following map
defines an isomorphism:
\begin{eqnarray*}
\C^{2} & \longrightarrow & U_0 = \{ u \neq 0 \} \ \subset \
\widehat{\C}^{2},\\
(x,y) \ & \mapsto & \big( (x, x y),[1: y] \big).
\end{eqnarray*}
\end{nothing}

\begin{nothing}\label{weighted_blowup_dimN_smooth}\textbf{Weighted $(p,q)$-blow-up of $\C^{2}$.} 
Let~$\w = (p,q)$ be a weight vector with coprime entries. As above, consider the space
$$
\widehat{\C}^{2}_{\w} := \big\{ ((x,y),[u:v]_{\w}) \in \C^{2} \times
\P^1_{\w} \mid (x,y) \in \overline{[u:v]}_{\w}
\big\}.
$$
It can be covered by $\widehat{\C}^2_{\w} = U_1 \cup U_2 = X(p;-1,q) \cup X(q;\,p,-1)$ and the charts are given by
$$
\begin{array}{c|rcl}
\text{First chart} & X(p;-1,q) & \longrightarrow & U_1, \\[0.10cm]
& \,[(x,y)] & \mapsto & ((x^p,x^q y),[1:y]_{\w}). \\ \multicolumn{2}{c}{} \\
\text{Second chart} & X(q;\,p,-1) & \longrightarrow & U_2, \\[0.10cm]
& \,[(x,y)] & \mapsto & ((x y^p, y^q),[x:1]_{\w}).
\end{array}
$$
The exceptional divisor $E:=\pi_{\w}^{-1}(0)$ is isomorphic to $\P^1_{\w}$ which
is in turn isomorphic to~$\P^1$ under the map
$
[x:y]_{\w} \longmapsto [x^q:y^p]$.
The singular points of $\widehat{\C}^2_{\w}$ are cyclic quotient singularities located at the exceptional divisor. They actually coincide with the origins of the two charts and they are written in their normalized form.
\end{nothing}

\begin{nothing}\textbf{Weighted $(p,q,r)$-blow-up of $\C^{3}$.}
Let $\pi := \pi_{\w}: \widehat{\C}^3_{\w} \to \C^3$ be the weighted blow-up at the origin with respect to $\w=(p,q,r)$,
$\gcd(\w)=1$. The new space is covered by three open sets
$$
\widehat{\C}^3_{\w} = U_1 \cup U_2 \cup U_3 = X(p;-1,q,r) \cup X(q;p,-1,r) \cup X(r;p,q,-1),
$$
and the charts are given by
\begin{equation*}
\begin{array}{cc}
X(p;-1,q,r) \longrightarrow U_1: & [(x,y,z)] \mapsto ((x^p, x^q y, x^r z),[1:y:z]_{\w}), \\[0.25cm]
X(q;p,-1,r) \longrightarrow U_2: & [(x,y,z)] \mapsto ((x y^p,y^q,y^r z),[x:1:z]_{\w}), \\[0.25cm]
X(r;p,q,-1) \longrightarrow U_3: & [(x,y,z)] \mapsto ((x z^p, y z^q, z^r),[x:y:1]_{\w}).
\end{array}
\end{equation*}

In general $\widehat{\C}^3_{\w}$ has three lines of (cyclic quotient) singular
points located at the three axes of the exceptional divisor $\pi^{-1}_{\w}(0) \simeq \P^2_{\w}$.
Namely, a generic point in $x=0$ is a cyclic point of type
$\C\times X(\gcd(q,r);p,-1)$.
Note that although the quotient spaces are written in their normalized form,
the exceptional divisor can be simplified:
$$
\begin{array}{rcl}
\P^2(p,q,r) & \longrightarrow & \P^2 \displaystyle\left(\frac{p}{(p,r)\cdot
(p,q)},\frac{q}{(q,p)\cdot (q,r)},
\frac{r}{(r,p)\cdot (r,q)}\right),\\[0.5cm]
\displaystyle \,[x:y:z] & \mapsto & [x^{\gcd(q,r)}:y^{\gcd(p,r)}:z^{\gcd(p,q)}].
\end{array} 
$$

Using just a weighted blow-up of this kind, one can find an embedded $\Q$-resolution for Brieskorn-Pham surfaces singularities, i.e.~$x^a + y^b + z^c = 0$, see Example~\ref{Brieskorn-Pham_surfaces}.
\end{nothing}

\section{Statement and Proof of the Main Theorem}\label{mainTH}

This section is devoted to the generalization of A'Campo's formula for embedded $\Q$-resolution.

One way to proceed is to rebuild A'Campo's paper~\cite{ACampo75}, thus giving a model of the Milnor fibration in our setting. This method is very natural but perhaps a bit long and tedious. In~\cite{GLM97}, the authors give a generalization of A'Campo's formula for the monodromy zeta function via partial resolution but the ambient space considered there is still smooth and the proof can not be generalized to an arbitrary analytic variety.

That is why a very general result by Dimca is used instead, see Theorem~\ref{th_dimca} below. This leads us to talk about constructible complexes of sheaves with respect to a stratification and also about the nearby cycles associated with an analytic function. Using this theorem only the monodromy zeta function of a monomial defining a function over a quotient space of type $(\bd;A)$ is needed.

\subsection{A result by Dimca}

To state the result we need some notions about sheaves and constructibility. We refer for instance to~\cite{Dimca04} and the references listed there for further details.

Consider $Sh(X,\text{Vect}_\C)$ the abelian category of sheaves of $\C$-vector spaces on a topological space~$X$. To simplify notation its derived category is often denoted by $D^{*}(X)$. The constant sheaf corresponding to $\C$ is denoted by $\underline{\C}_X$; it is by definition the sheaf associated with the constant presheaf that sends every open subset of $X$ to $\C$. If $U \subset X$ is connected open then $\underline{\C}_X (U) = \C$.

Let $f: X \to Y$ be a continuous mapping between two topological spaces. The direct image functor $f_{*}: Sh(X,\text{Vect}_\C) \rightarrow Sh(Y,\text{Vect}_\C)$ is defined on objects by $(f_{*} \mathcal{F})(V) = \mathcal{F}(f^{-1}(V))$, for any sheaf $\mathcal{F}$ on~$X$ and any open set $V \subset Y$.
This functor is additive and left exact; its derived functor is denoted by $R f_{*} : D^{*}(X) \to D^{*}(Y)$.

The inverse image functor $f^{-1}: Sh(Y,\text{Vect}_\C) \rightarrow Sh(X,\text{Vect}_\C)$ is defined as $f^{-1} \mathcal{G}$ being the sheaf associated with the presheaf
$$
  U \longmapsto \lim_{\begin{smallmatrix} \longrightarrow \\ f(U) \subset V \end{smallmatrix}} \mathcal{G}(V).
$$
Here $\mathcal{G}$ is a sheaf on $Y$ and $U\subset X$ is open. This functor is exact and hence the corresponding derived functor $Rf^{-1}: D^{*}(Y) \to D^{*}(X)$ is usually denoted again by $f^{-1}$.

If $f(U) \subset Y$ is open then $(f^{-1} \mathcal{G})(U) = \mathcal{G}(f(U))$. In particular, if $i_{U}: U \hookrightarrow X$ denotes the inclusion of an open set, then $i_{U}^{-1} \mathcal{F} = \mathcal{F}|_U$. The restriction to an arbitrary subspace $Z \subset X$ is defined by $\mathcal{F}|_Z := i_{Z}^{-1} \mathcal{F}$, where $i_Z: Z \hookrightarrow X$ is the inclusion.
Using this notation one has $\underline{\C}_X|_Z := i^{-1}_Z \underline{\C}_X = \underline{\C}_Z$.

Let $X$ be a complex analytic space and $\mathcal{S}=\{X_j\}_{j\in J}$ a locally finite partition of $X$ into non-empty, connected, locally closed subsets called {\em strata} of $\mathcal{S}$. The partition $\mathcal{S}$ is called a {\em stratification} if it satisfies the following conditions.

\begin{enumerate}
\item The boundary condition, i.e.~each boundary $\partial X_j = \overline{X_j}\setminus X_j$ is a union of strata in $\mathcal{S}$.
\item Constructibility, i.e.~for all $j\in J$ the spaces $\overline{X_j}$ and $\partial X_j$ are closed complex analytic subspaces in $X$.
\item Stratification, i.e.~all the strata are smooth constructible subvarieties of $X$.
\end{enumerate}

\begin{defi}
Let $\mathcal{S} = \{ X_j \}_{j\in J}$ be a stratification on $X$.

(i) A sheaf complex $\mathcal{F}^\bullet \in D^{*}(X)$ is called {\em $\mathcal{S}$-constructible} if the restriction of each cohomology sheaf $\mathcal{H}^q ( \mathcal{F}^{\bullet} )|_{X_j}$ is a $\underline{\C}_{X_j}$-local system of finite rank, that is, one has the isomorphisms of $\underline{\C}_{X_j}$-vector spaces $\mathcal{H}^q ( \mathcal{F}^{\bullet} )|_{X_j} \ \simeq \ \underline{\C}_{X_j}^{r_{j,q}}$.

(ii) Given $u: \mathcal{F}^{\bullet} \to \mathcal{F}^{\bullet}$ an automorphisms of $\underline{\C}_X$-vector spaces, the complex $\mathcal{F}^{\bullet}$ is called {\em equivariantly $\mathcal{S}$-constructible} with respect to $u$, if it is $\mathcal{S}$-constructible and the induced automorphisms on the cohomology groups $\mathcal{H}^q (u)_x : \mathcal{H}^m ( \mathcal{F}^{\bullet} )_x \to \mathcal{H}^m ( \mathcal{F}^{\bullet} )_x$ are all conjugate.
\end{defi}

Let $X$ be a complex analytic variety and $g:X\to \C$ a non-constant analytic function. Consider the diagram,
$$
\xymatrix{
g^{-1}(0) \ar@{^{(}->}[r]^>>>>>{i} & X & \ar @{} [dr] |{\#} X \setminus g^{-1}(0) \ar[d]_{f} \ar@{_{(}->}[l]_>>>>>{j} & E \ar[d]^{\hat{f}} \ar[l]_>>>>>{\hat{\pi}} \\
&& \C^{*} & \widetilde{\C}^{*} \ar[l]^{\exp}
}
$$
where $i: g^{-1}(0) \hookrightarrow X$ and $j: X\setminus g^{-1}(0) \hookrightarrow X$ are inclusions, $\widetilde{\C}^{*}$ is the universal cover of~$\C^{*}$, and $E$ denotes the pull-back.

\begin{defi}
Let $\mathcal{F}^\bullet \in D^{*}(X)$ be a complex. The {\em nearby cycles} of $\mathcal{F}^\bullet$ with respect to the function $g: X \to \C$ is defined to be the sheaf complex given by
$$
  \psi_g \mathcal{F}^\bullet := i^{-1} R(j\circ \hat{\pi})_{*} (j\circ \hat{\pi})^{-1} \mathcal{F}^\bullet \ \in \ D^{*} ( g^{-1}(0) ).
$$
\end{defi}

The nearby cycles is a local operation in the sense that if $U \subset X$ is an open set, then~$(\psi_g \mathcal{F}^\bullet)|_W = \psi_{g|_W} \mathcal{F}^\bullet|_W$ holds. Also, note that $\psi_g \mathcal{F}^\bullet$ only depends on $g$ and $\mathcal{F}^\bullet|_{X\setminus g^{-1}(0)}$.

There is an associated monodromy deck transformation $h: E \to E$ coming from the action of the natural generator of $\pi_1 (\C^{*})$ which satisfies $\hat{\pi} \circ h = \hat{\pi}$. This homeomorphism induces an isomorphism of complexes
$$
  M: \psi_g \mathcal{F}^\bullet \longrightarrow \psi_g \mathcal{F}^\bullet.
$$

For every point $x\in g^{-1}(0)$ there is a natural isomorphism from the stalk cohomology of $\psi_g \mathcal{F}^\bullet$ at $x$ to the cohomology of the Milnor fiber at $x$ with coefficients in $\mathcal{F}^\bullet$, that is, for all $\epsilon > 0$ small enough and all $t \in \C^{*}$ with $|t| << \epsilon$, one has
\begin{equation}\label{isom_nearby_cycles}
\mathcal{H}^q (\psi_g \mathcal{F}^\bullet)_x \simeq \mathbb{H}^q ( g^{-1}(t) \cap B_\epsilon(x), \mathcal{F}^\bullet_{|}) \simeq \mathbb{H}^q ( g^{-1}(t) \cap \overline{B_\epsilon(x)}, \mathcal{F}^\bullet_{|}),
\end{equation}
where the open ball $B_\epsilon(x)$ is taken inside any local embedding of $(X,x)$ in an affine space.

The monodromy morphism $M_x$ on the left-hand side corresponds to the morphism on the right-hand side induced by the monodromy homeomorphism of the local Milnor fibration associated with $g:(X,x) \to (\C,0)$.

Now we are ready to state Dimca's theorem. To be precise he only considered the case when the ambient space is smooth $M = \C^{n+1}$, see below. Repeating exactly the same arguments one obtains the result for any analytic variety.

\begin{theo}[\cite{Dimca04}, Th.~6.1.14]\label{th_dimca}
Let $f:(M,p) \to (\C,0)$ be the germ of a non-constant analytic function which is defined on a small neighborhood $U$ of $p$. Let $H$ be the hypersurface $\{ x \in U \mid f(x) = 0 \}$. Assume $\pi: X \to U$ is a proper analytic map such that $\pi$ induces an isomorphism between $X\setminus \pi^{-1} (H)$ and $U\setminus H$.

Let $g = f\circ \pi$ denote the composition and $j: X\setminus \pi^{-1}(H) \hookrightarrow X$ the inclusion.
Let $\mathcal{S}$ be a finite stratification of the exceptional divisor $\pi^{-1}(0)$ such that $\psi_g \big( R j_{*} \underline{\C}_{\,X\setminus \pi^{-1}(H)} \big)$ is equivariantly $\mathcal{S}$-constructible with respect to the semisimple part of $M$. Then,
$$
  \Lambda(h) = \sum_{S\in \mathcal{S}} \chi(S) \Lambda(g,x_S)\,; \qquad
  Z(f) = \prod_{S\in \mathcal{S}} Z(g,x_S)^{\chi(S)},
$$
where $x_S$ is an arbitrary point in the stratum $S$ and $Z(g,x_S)$, $\Lambda(g,x_s)$ are the zeta function and the Lefschetz number of the germ $g$ at $x_S$.
\end{theo}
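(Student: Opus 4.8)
The plan is to follow Dimca's original argument essentially verbatim, the key observation being that it takes place entirely on the resolution side $X$ and its exceptional fibre $\pi^{-1}(0)$, together with abstract sheaf theory, so the hypothesis $M=\C^{n+1}$ is never used and may be dropped. First I would reduce the Milnor fibre cohomology of $f$ at $p$, with its monodromy, to the hypercohomology of a nearby-cycle complex on the compact set $\pi^{-1}(0)$. Write $U'=X\setminus\pi^{-1}(H)$, $V=U\setminus H$, and $j':V\hookrightarrow U$. Since $\pi|_{U'}:U'\to V$ is an isomorphism and $\pi\circ j=j'\circ(\pi|_{U'})$, one has $R\pi_{*}\bigl(Rj_{*}\underline{\C}_{U'}\bigr)\cong Rj'_{*}\underline{\C}_{V}$; and since $\psi_f$ depends only on the restriction of its argument to $U\setminus H$, where $Rj'_{*}\underline{\C}_{V}$ agrees with $\underline{\C}_{U}$, this gives $\psi_f\bigl(Rj'_{*}\underline{\C}_{V}\bigr)=\psi_f\underline{\C}_{U}$. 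I would then invoke the compatibility of nearby cycles with proper push-forward (available because $\pi$ is proper and $g=f\circ\pi$), applied along the restriction $\bar\pi:\pi^{-1}(H)\to H$, to obtain, compatibly with the monodromy morphisms, $\psi_f\underline{\C}_{U}\cong R\bar\pi_{*}\bigl(\psi_g(Rj_{*}\underline{\C}_{U'})\bigr)$. Taking stalks at $p$ and applying proper base change --- the fibre of $\bar\pi$ over $p$ being the compact set $\pi^{-1}(0)$ --- yields a monodromy-equivariant isomorphism
\[
H^{q}(F_f,\C)\;\cong\;\mathbb{H}^{q}\bigl(\pi^{-1}(0),\,\mathcal{K}^{\bullet}\bigr),\qquad \mathcal{K}^{\bullet}:=\psi_g\bigl(Rj_{*}\underline{\C}_{U'}\bigr)\big|_{\pi^{-1}(0)},
\]
under which $h^{*}$ goes to the operator induced by $M$.

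Next I would apply a Lefschetz-type additivity formula for a constructible complex equipped with an automorphism on a compact complex analytic space. The complex $\mathcal{K}^{\bullet}$ is $\mathcal{S}$-constructible and, by hypothesis, equivariantly $\mathcal{S}$-constructible with respect to the semisimple part $M_s$ of $M$; since the trace of an endomorphism equals that of its semisimple part, and $\Lambda$ and $Z$ involve only traces, it suffices to work with $M_s$. A standard d\'evissage along $\mathcal{S}$ --- repeatedly splitting off a locally closed stratum by means of the attached excision triangle, compatibly with $M$, and inducting on the finitely many strata --- rewrites the alternating trace of $M$ on $\mathbb{H}^{*}(\pi^{-1}(0),\mathcal{K}^{\bullet})$ as a sum of contributions of the strata. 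On a single connected stratum $S$ the complex restricts to a complex of local systems, the equivariance makes all the stalk operators $M_{x_S}$ conjugate hence of equal trace, and the contribution of $S$ becomes $\chi(S)$ times the local alternating trace $\sum_q(-1)^q\tr\bigl(M_{x_S}\mid\mathcal{H}^q(\mathcal{K}^{\bullet})_{x_S}\bigr)$; here $\chi=\chi_c$ for complex analytic strata, so there is no ambiguity. To identify this local trace I would use~\eqref{isom_nearby_cycles} for $\psi_g(Rj_{*}\underline{\C}_{U'})$ together with the fact that $g^{-1}(t)$ lies in $U'$ for $t\neq 0$, where $Rj_{*}\underline{\C}_{U'}$ is simply $\underline{\C}$; this yields $\mathcal{H}^q(\mathcal{K}^{\bullet})_{x_S}\cong H^q(F_{g,x_S},\C)$ compatibly with $M_{x_S}$ and the geometric monodromy of $g$ at $x_S$, so the local trace equals $\Lambda(g,x_S)$. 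Assembling the three steps gives $\Lambda(h)=\sum_{S}\chi(S)\Lambda(g,x_S)$. For the zeta function I would rerun the argument with $M^{k}$ for every $k\ge 1$ --- equivariant constructibility with respect to $M_s$ passing to $M^{k}$ with respect to $M_s^{k}$ --- getting the same identity for the $k$-th iterates, and then substitute into~\eqref{zeta_numLef}, the exponential converting the sum over $S$ into the product $Z(f)=\prod_{S}Z(g,x_S)^{\chi(S)}$.

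The hard part will be the d\'evissage in the equivariant setting: one must check carefully that the excision triangles are compatible with the automorphism $M$ and that the induction on strata is legitimate under the boundary and constructibility conditions imposed on $\mathcal{S}$. This is precisely the ``Euler characteristic with automorphism'' formalism for constructible complexes developed in~\cite{Dimca04}, and the one place at which one must confirm that nothing there relied on $M=\C^{n+1}$ --- it does not. The compatibility of $\psi_g$ with proper push-forward used in the first step is a further nontrivial input, but it is a general theorem requiring only that $\pi$ be proper.
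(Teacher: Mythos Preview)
The paper does not actually prove this theorem: it is quoted from~\cite{Dimca04} with the single remark that Dimca's hypothesis $M=\C^{n+1}$ is never used and that ``repeating exactly the same arguments one obtains the result for any analytic variety.'' Your proposal is a correct and reasonably detailed expansion of precisely those arguments---proper base change plus compatibility of $\psi_g$ with $R\pi_*$, followed by the equivariant d\'evissage over the stratification---so you have supplied what the paper merely asserts by citation.
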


\begin{remark}
Let $\mathcal{F}^\bullet = Rj_{*} \underline{\C}_{X\setminus \pi^{-1}(H)}$. Using the notation of the previous theorem the isomorphism of~(\ref{isom_nearby_cycles}) tells us that $\mathcal{H}^q (\psi_g \mathcal{F}^\bullet)_x = H^q (F_x, \C)$ where~$F_x$ is the Milnor fiber at $x$. This clarifies when the complex of sheaves $\psi_g \mathcal{F}^\bullet$ is equivariantly $\mathcal{S}$-constructible with respect to the semisimple part of $M$. In particular, this condition is satisfies for instance when the local equation of $g$ along each stratum is the same.
\end{remark}

\subsection{Monodromy zeta function of a normal crossing divisor}

Let $M = \C^{n}/\mu_\bd$ be a quotient space of type $X(\bd; A)$, not necessarily cyclic or written in a normalized form. Recall the multi-index notation.
$$
  X(\bd;A) = X \left( \begin{array}{c|ccc} d_1 & a_{11} & \ldots & a_{1n}\\ \vdots & \vdots & \ddots & \vdots \\ d_r & a_{r1} & \ldots & a_{rn} \end{array} \right),\quad \begin{array}{c} \bd = (d_1,\ldots,d_r), \\[0.1cm] \ \ba_j = (a_{1j},\ldots,a_{rj}). \end{array}
$$

In Section~\ref{prelim}, cf.~Example~\ref{Ex_quo_dim1}, we have seen that for each $j=1,\ldots,n$ there is an isomorphism
\begin{equation}\label{iso_dim_1_l}
\begin{array}{rcl}
X(\bd; \ba_j) & \longrightarrow & \C \\[0.15cm]
\,[x_j] & \mapsto & x_j^{\l_j},
\end{array}
\end{equation}
where 
\begin{equation*}
\displaystyle  \l_j = \lcm \left( \frac{d_1}{\gcd(d_1,a_{1j})}, \ldots, \frac{d_r}{\gcd(d_r,a_{rj})} \right).
\end{equation*}

Given a homogeneous polynomial defined over $M$ the classical formula for the monodromy zeta function depending on the degree of the polynomial and the Euler characteristic of the Milnor fiber seems to be more complicated in this setting. Using resolution of singularities,
one can provide formulas at least for plane curves and surfaces but the trick of applying the fixed point theorem does not work anymore. However, for our purpose, only the normal crossing case is needed.

Note that the zeta function and the Lefschetz numbers also exist in case of singular underlying spaces, such as $X{(\bd;A)}$. Moreover, if the function $f$ is defined by a quasi-homogeneous polynomial, then $f: X{(\bd;A)}\setminus f^{-1}(0) \to \C^{*}$ is a locally trivial fibration and the global Minor fibration is equivalent to the local one.

\vspace{0.25cm}

We first proceed to compute the geometric monodromy of a homogeneous polynomial $f: M \to \C$ of degree $N := \deg(f)$. Let $\alpha:[0,1] \to \C^{*}$ be a generator of the fundamental group of $\C^{*}$, for example, $\alpha(t) = \exp(2\pi i t)$ and consider $[{\bf x}] \in F = f^{-1}(1)$. The path
\begin{eqnarray*}
\widetilde{\alpha} :\, [0,1] & \longrightarrow & M \setminus f^{-1}(0) \\
t & \mapsto & \big[ ( e^{\frac{2\pi i}{N} t} x_1, \ldots, e^{\frac{2\pi i}{N} t} x_n) \big],
\end{eqnarray*}
defines a lifting of~$\alpha$ with initial point $[(x_1,\ldots,x_n)]$. Thus the geometric monodromy $h: F \to F$ corresponds to the map
$$
  \widetilde{\alpha}(0) = \big[ (x_1,\ldots,x_n) \big] \ \stackrel{h}{\longmapsto} \ \big[ ( e^{\frac{2\pi i}{N}} x_1, \ldots, e^{\frac{2\pi i}{N}} x_n) \big] = \widetilde{\alpha}(1).
$$

As in the case $M=\C^n$, this also works for quasi-homogeneous polynomials, replacing the exponentials for suitable numbers according to the weights.

\vspace{0.25cm}

Let us study the monodromy zeta function in the simplest normal crossing case, i.e.~$f = x_1^{m_1}: M \to \C$. The Milnor fiber
$$
F := f^{-1}(1) = \{ [{\bf x}] \in M \mid x_1^{m_1} = 1 \}
$$
has the same homotopy type as $F' := \{ [(x_1, 0, \ldots,0)]\in M \mid x_1^{m_1} = 1 \}$, which can be identified with
$$
 \{ [x_1] \in X(\bd; \ba_1) \mid x_1^{m_1} = 1 \}.
$$

In fact, $r: F \to F' : [{\bf x}] \mapsto [x_1]$ is a strong deformation retraction. Since $h(F') \subset F'$, the geometric monodromy $h:F \to F$ is homotopic to its restriction $h':= h|_{F'}: F' \to F'$.
Using the isomorphism \eqref{iso_dim_1_l},
$$
X(\bd; \ba_1) \simeq \C: [x] \mapsto x^{\l_1},
$$
the claim is reduced to the calculation of the zeta function of the polynomial $x_1^{m_1/\l_1} : \C \to \C$. But this is known to be~$1-t^{m_1/\l_1}$.

\vspace{0.25cm}

Assume now that $f = x_1^{m_1} \cdots x_k^{m_k} : M \to \C$, $k \geq 2$. The Milnor fiber $F:=f^{-1}(1)$ has the same homotopic type as
$$
  F':= \Big\{ \big[ (x_1, \ldots x_k) \big] \in \frac{S^1 \times \stackrel{(k)}{\cdots} \times S^1}{\mu_\bd} \ \ \big| \ \ x_1^{m_1} \cdot \ldots \cdot x_k^{m_k} = 1 \Big\},
$$
where $\mu_\bd$ defines an action of type $(\bd; \ba_1, \ldots, \ba_k)$ on the space $(S^1)^k$. As above, there is a strong deformation retraction
$$
r\,:\, F \longrightarrow F', \quad
\,[{\bf x}] \mapsto \Big[ \big( \frac{x_1}{|x_1|}, \ldots, \frac{x_k}{|x_k|}, 0, \ldots,0 \big) \Big],
$$
that satisfies $h(F')\subset F'$.
We shall see that the Lefschetz numbers $\Lambda((h')^j) = \Lambda(h^j) = 0$ for all $j\geq 1$. This would imply $Z_f(t) = 1$ by virtue of~(\ref{zeta_numLef}). Two cases arise.
\begin{itemize}
\item If $(h')^j$ does not have fixed points, then by the fixed point theorem $\Lambda((h')^j) = 0$.
\item Otherwise $(h')^j$ is the identity map and $\Lambda((h')^j) = \chi(F') = 0$.
\end{itemize}

Note that there is an unramified covering
$$
(S^1)^k \supset \widetilde{F'} := \{ x_1^{m_1} \cdot \ldots \cdot x_k^{m_k} = 1\} \stackrel{\pi}{\longrightarrow} F'
$$
with a finite number of sheets. The first of the preceding spaces $\widetilde{F'}$ has $e = \gcd(m_1,\ldots,m_k)$ disjoint components, each of them homotopically equivalent to a real $(k-1)$-dimensional torus~$\mathcal{T}_{k-1} = (S^1)^{k-1}$. It follows that
$$
\chi(F') = \frac{1}{\deg \pi}\, e\, \chi(\mathcal{T}_{k-1}) = 0.
$$

Note that the condition $k\geq 2$ has only been used at the end. In the case $k=1$, one has
$$
\deg \pi = \l_1, \quad e=m_1, \quad \chi(\mathcal{T}_0) = 1, \quad \chi(F') = m_1/\l_1.
$$

We summarize the previous discussion in the following lemma.

\begin{lemma}\label{lemma_zetaMon}
The monodromy zeta function of a normal crossing divisor given by $x_1^{m_1} \cdot \ldots \cdot x_k^{m_k}: X(\bd; A) \rightarrow \C$, $k \geq 1$, is
$$
Z \left( x_1^{m_1} \cdot \ldots \cdot x_k^{m_k}: X(\bd; A ) \rightarrow \C;\, t \right) =
\begin{cases}
1-t^{\frac{m_1}{\l_1}} & k=1 ; \\
1 & k\geq 2,
\end{cases}
$$
where $\displaystyle  \l_1 = \lcm \left( \frac{d_1}{\gcd(d_1,a_{11})}, \ldots, \frac{d_r}{\gcd(d_r,a_{r1})} \right)$.
\end{lemma}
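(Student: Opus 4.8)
The plan is to reduce the statement to two elementary computations — a genuinely one-variable one when $k=1$, and a vanishing-of-Lefschetz-numbers argument when $k\ge 2$ — by exploiting that the monodromy of a homogeneous function is completely explicit. By hypothesis $f=x_1^{m_1}\cdots x_k^{m_k}$ is a well-defined function on $X(\bd;A)$, hence $\mu_\bd$-invariant; it is moreover homogeneous of degree $N=m_1+\dots+m_k$ for the diagonal $\C^{*}$-action on $\C^n$, which commutes with the linear $\mu_\bd$-action and so descends to $X(\bd;A)$. Therefore $f\colon X(\bd;A)\setminus f^{-1}(0)\to\C^{*}$ is a locally trivial fibration and the global Milnor fibration agrees with the local one, so it suffices to compute the zeta function of the global geometric monodromy $h\colon F\to F$, $[\mathbf x]\mapsto[e^{2\pi i/N}\mathbf x]$, on $F=f^{-1}(1)$. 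Throughout, the maps I use (normalizing the moduli of coordinates, linearly scaling the inessential coordinates to zero, multiplying all coordinates by a fixed unit scalar) commute with the $\mu_\bd$-action and hence descend to the quotient.

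\textbf{Case $k=1$.} Since $x_1^{m_1}$ is $\mu_\bd$-invariant, one gets $d_i\mid m_1a_{i1}$ for every $i$, i.e.\ $\l_1\mid m_1$, so $m_1/\l_1$ is a positive integer. I would then check that the radial map $r\colon F\to F'$, $[\mathbf x]\mapsto[x_1]$, onto $F'=\{[x_1]\in X(\bd;\ba_1)\mid x_1^{m_1}=1\}$ is a well-defined strong deformation retract and that $h(F')\subset F'$, so that $Z(f)$ equals the zeta function of $h|_{F'}$. Under the dimension-one isomorphism~\eqref{iso_dim_1_l}, $X(\bd;\ba_1)\xrightarrow{\ \sim\ }\C$, $[x]\mapsto x^{\l_1}$, the set $F'$ corresponds to $\{u\in\C\mid u^{m_1/\l_1}=1\}$ (as $u^{m_1/\l_1}=x_1^{m_1}$), i.e.\ to the Milnor fibre at $1$ of $u\mapsto u^{m_1/\l_1}$; the classical one-variable A'Campo computation then gives $Z(u^{m_1/\l_1}\colon\C\to\C;t)=1-t^{m_1/\l_1}$, which is the asserted value.

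\textbf{Case $k\ge 2$.} Here the radial retraction carries $F$ onto $F'=\{[(x_1,\dots,x_k)]\in(S^1)^k/\mu_\bd\mid x_1^{m_1}\cdots x_k^{m_k}=1\}$ (all essential coordinates being nonzero on $F$), again a strong deformation retract with $h(F')\subset F'$; write $h'=h|_{F'}$. By~\eqref{zeta_numLef} it is enough to prove $\Lambda\big((h')^j\big)=0$ for all $j\ge 1$, via the dichotomy: if $(h')^j$ has no fixed point, the Lefschetz fixed point theorem gives $\Lambda\big((h')^j\big)=0$; if it has a fixed point $[\mathbf x]\in F'$, then $e^{2\pi ij/N}\mathbf x=\bxi\cdot\mathbf x$ for some $\bxi\in\mu_\bd$, and since $x_1,\dots,x_k\ne 0$ this forces $\bxi^{\ba_i}=e^{2\pi ij/N}$ for all $i\le k$, i.e.\ $\bxi$ scales every essential coordinate by one and the same unit scalar, so $(h')^j=\id_{F'}$ and $\Lambda\big((h')^j\big)=\chi(F')$. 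Everything thus reduces to $\chi(F')=0$, which I would obtain by passing to $\widetilde{F'}=\{x_1^{m_1}\cdots x_k^{m_k}=1\}\subset(S^1)^k$: the ineffective part of the $\mu_\bd$-action on $(\C^{*})^k$ acts trivially on $\widetilde{F'}$, while the effective quotient $G$ acts on the torus $(S^1)^k$ by fixed-point-free translations and preserves $\widetilde{F'}$, so $\pi\colon\widetilde{F'}\to\widetilde{F'}/G=F'$ is a finite unramified covering; and $\widetilde{F'}=\ker\big((S^1)^k\to S^1,\ x\mapsto\prod x_i^{m_i}\big)$ is a closed subgroup of $(S^1)^k$ isomorphic to $\mu_e\times(S^1)^{k-1}$ with $e=\gcd(m_1,\dots,m_k)$, i.e.\ a disjoint union of $e$ copies of $\mathcal{T}_{k-1}=(S^1)^{k-1}$, so $\chi(\widetilde{F'})=e\,\chi(\mathcal{T}_{k-1})=0$ and hence $\chi(F')=\chi(\widetilde{F'})/\deg\pi=0$, giving $Z(f)=1$.

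\textbf{Where the work is.} The routine parts are verifying that the radial maps are well-defined strong deformation retracts on the quotients and are compatible up to homotopy with $h$. The two points that genuinely require care are (i) the fixed-point dichotomy for $(h')^j$, which hinges on the monodromy of a homogeneous polynomial acting by a single common scalar so that one coincidence propagates to all of $F'$; and (ii) the claim that $\pi$ is \emph{unramified} — since an arbitrary finite group action need not preserve the vanishing of $\chi$ upon quotienting, one must first discard the ineffective part of the action so that $G$ acts by free translations on the torus $\widetilde{F'}$, after which multiplicativity of the Euler characteristic applies.
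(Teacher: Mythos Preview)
Your proof is correct and follows essentially the same route as the paper: retract the Milnor fiber to $F'$ (the one-dimensional quotient for $k=1$, the torus quotient for $k\ge 2$), then in the first case invoke the isomorphism~\eqref{iso_dim_1_l} to reduce to $u\mapsto u^{m_1/\l_1}$, and in the second case run the fixed-point dichotomy for $(h')^j$ together with $\chi(F')=0$ via the finite cover $\widetilde{F'}\to F'$. You are in fact a bit more explicit than the paper on two points it states without justification --- why a single fixed point of $(h')^j$ forces $(h')^j=\id_{F'}$, and why the covering is genuinely unramified (your observation that one must first strip off the ineffective part of the $\mu_\bd$-action on $(S^1)^k$ is exactly what makes this work).
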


\subsection{A'Campo's formula for embedded $\Q$-resolutions}

Let $f: (M,0) \to (\C,0)$ be a non-constant analytic function germ and let $(H,0) \subset (M,0)$ be the hypersurface defined by $f$. Given an embedded ${\bf Q}$-resolution of $(H,0)$, $\pi: X \to (M,0)$, consider as in the classical case,
$$
  \check{E}_i := E_i \setminus \Bigg(E_i \cap \Big( \bigcup_{ \begin{subarray}{c} k=1,\ldots,s \\ k\neq i \end{subarray} } E_k \cup \widehat{H} \Big) \Bigg),
$$
where $E_1, \ldots, E_s$ are the irreducible components of the exceptional divisor of $\pi$, and $\widehat{H}$ is the strict transform of $H$.

\begin{defi}
Let $X$ be a complex analytic space having only abelian quotient singularities and consider $D$ a $\mathbb{Q}$-divisor with normal crossings on~$X$. Let $q\in D$ be a point living in exactly one irreducible component of $D$. Then, the equation of~$D$ at~$q$ is given by a function of the form $x_j^{m}: X(\bd; A) \rightarrow \C$, where $x_j$ is a local coordinate of $X$ in~$q$.

The {\em multiplicity} of $D$ at $q$, denoted by $m(D,q)$, is defined by
$$
m(D,q):= \frac{m}{\l_j}, \qquad \l_j = \lcm \left( \frac{d_1}{\gcd(d_1,a_{1j})},\ldots,\frac{d_r}{\gcd(d_r,a_{rj})} \right).
$$

If there exists $T$ contained in exactly one irreducible component of~$D$ and the function $q\in T \mapsto m(D,q)$ is constant, then we use the notation $m(T) := m(D,q)$, where $q\in T$ is an arbitrary point.
\end{defi}

\begin{remark}
The integer $m(D,q)$ does not depend on the type $(\bd;A)$ representing the quotient space. A more general definition, including the case when $q \in D$ belongs to more than one irreducible component, will be given in a future work. \end{remark}

To simplify the notation one writes $E_0 = \widehat{H}$ and $S=\{0,1,\ldots,s\}$ so that the stratification of $X$ associated with the $\mathbb{Q}$-normal crossing divisor $\pi^{-1}(H) = \bigcup_{i \in S} E_i$ is defined by setting
\begin{equation}\label{strat_emb_res}
  E_{I}^\circ := \Big( \cap_{i \in I} E_i \Big) \setminus \Big( \cup_{i\notin I} E_i \Big),
\end{equation}
for a given possibly empty set $I\subseteq S$. Note that, for $i=1,\ldots,s$, one has that $E_{\{i\}}^{\circ} = \check{E}_i$.


Let $X = \bigsqcup_{j\in J} Q_j$ be a finite stratification on $X$ given by its quotient singularities so that the local equation of $g = f\circ \pi\,$ at $q \in E_I^{\circ} \cap Q_j$ is of the form
$$
  x_1^{m_1} \cdot \ldots \cdot x_k^{m_k}:\, B/G \longrightarrow \C,
$$
where $B$ is an open ball around $q$, and $G$ is an abelian group acting diagonally as in $(\bd;A)$. The multiplicities $m_i$'s and the action $G$ are the same along each stratum $E_I^{\circ} \cap Q_j$, i.e.~it does not depend on the chosen point $q \in E_I^{\circ} \cap Q_j$. Let us denote
$$
\check{E}_{i,j} := \check{E}_i \cap Q_j, \qquad m_{i,j} := m(\check{E}_{i,j}).
$$


The following result is nothing but a generalization of Theorem~\ref{ATH2} written in the language of divisors. To use the classical convection on indices $M= \C^{n+1}/\mu_\bd$ (instead of $\C^n/\mu_{\bf d}$) in the theorem below.

\begin{theo}\label{ATH}
Let $f:(M,0) \to (\C,0)$ be a non-constant analytic function germ and let $H = \{ f = 0\}$. Consider $F$ the Milnor fiber and $h:F \to F$ the geometric monodromy. Assume $\pi:X \to (M,0)$ is an embedded $\Q$-resolution of $(H,0)$. Then, using the notation above, one has: ($i=1,\ldots,s$,\, $j\in J$)
\begin{enumerate}
\item The Lefschetz number of $\,h^k = h\, \circ \stackrel{(k)}{\cdots} \circ\,
h: F \to F$, $k\geq 0$, and the Euler characteristic of $F$ are
$$
  \displaystyle \Lambda(h^k) = \sum_{i,j,\ k|m_{i,j}} m_{i,j} \cdot \chi( \check{E}_{i,j} ), \hspace{0.5cm} \chi(F) = \sum_{i,j} m_{i,j} \cdot \chi( \check{E}_{i,j} ) = \Lambda(h^0).
$$
\item The local monodromy zeta function of $f$ at $0$ is
$$
  \displaystyle Z(t) = \prod_{i,j} \left(1-t^{m_{i,j}}
  \right)^{\chi( \check{E}_{i,j} )}.
$$
\item In the isolated case, the characteristic polynomial of the complex monodromy of $(H,0) \subset (M,0)$~is
$$
  \Delta(t) = \left[\frac{1}{t-1} \prod_{i,j} \left(t^{m_{i,j}}-1
  \right)^{\chi( \check{E}_{i,j} )}\right]^{(-1)^n},
$$
and the Milnor number is $\displaystyle\mu = (-1)^n
\Big[-1+\sum_{i,j} m_{i,j} \cdot \chi( \check{E}_{i,j}) \Big]$. 
\end{enumerate}
\end{theo}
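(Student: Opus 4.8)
The plan is to derive Theorem~\ref{ATH} from Dimca's Theorem~\ref{th_dimca} combined with the local computation of Lemma~\ref{lemma_zetaMon}; once an admissible stratification is produced and its equivariant constructibility is verified, the statement is a formal manipulation (plus, for part~(3), a standard input on the topology of the Milnor fibre). Fix a representative $\pi:X\to U$ of the germ, with $U$ a small ball around $0\in M$; by Definition~\ref{Qresolution} ($\pi$ is an isomorphism over $U\setminus H$) Theorem~\ref{th_dimca} applies to $g=f\circ\pi$ as soon as a suitable stratification of $\pi^{-1}(0)$ has been fixed.

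I would take as stratification $\mathcal S$ of $\pi^{-1}(0)$ the one whose strata are the connected components of the sets $E_I^{\circ}\cap Q_j$, where $\{E_I^{\circ}\}$ is the family~(\ref{strat_emb_res}) attached to the $\Q$-normal crossing divisor $\pi^{-1}(H)=\bigcup_{i\in S}E_i$ and $\{Q_j\}_{j\in J}$ is the stratification of $X$ by quotient singularity type; passing to a common analytic refinement one may assume $\mathcal S$ satisfies the boundary and constructibility axioms. Since $\pi^{-1}(0)$ is covered by $E_1,\dots,E_s$, the only strata occurring have $I\cap\{1,\dots,s\}\neq\emptyset$, and $E_{\{i\}}^{\circ}\cap Q_j=\check E_{i,j}$ for $i=1,\dots,s$. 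The step I expect to be the main obstacle is checking that $\psi_g\mathcal F^{\bullet}$, with $\mathcal F^{\bullet}=Rj_{*}\underline{\C}_{X\setminus\pi^{-1}(H)}$, is equivariantly $\mathcal S$-constructible with respect to the semisimple part of the monodromy $M$. By the way the $\Q$-resolution and the stratification are built, near any point $q\in E_I^{\circ}\cap Q_j$ the pair $(X,\pi^{-1}(H))$ is isomorphic to a product $\big(\C^{c}\times X(\bd;A),\,\C^{c}\times\{x_1^{m_1}\cdots x_k^{m_k}=0\}\big)$ with $c=\dim_{\C}(E_I^{\circ}\cap Q_j)$, the exponents $m_i$ and the action $(\bd;A)$ being locally constant along the stratum; hence $g$ is locally the pull-back of the fixed transverse monomial $x_1^{m_1}\cdots x_k^{m_k}:X(\bd;A)\to\C$, the family of local Milnor fibrations of $g$ over $E_I^{\circ}\cap Q_j$ is locally trivial, and by~(\ref{isom_nearby_cycles}) and the remark after Theorem~\ref{th_dimca} each $\mathcal H^q(\psi_g\mathcal F^{\bullet})|_{E_I^{\circ}\cap Q_j}$ is a finite-rank local system on which the monodromy acts by conjugate automorphisms, which is exactly the required property. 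Turning the local product structure into triviality along the whole stratum is the routine-but-delicate part here.

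With the hypotheses in place, Theorem~\ref{th_dimca} gives $Z(f)=\prod_{I,j}Z(g,x_{I,j})^{\chi(E_I^{\circ}\cap Q_j)}$, and it remains to evaluate the local factors. At $q=x_{I,j}$, discarding the contractible smooth factor $\C^{c}$ leaves the germ $x_1^{m_1}\cdots x_k^{m_k}:X(\bd;A)\to\C$ with $k=|I|$; by Lemma~\ref{lemma_zetaMon}, $Z(g,x_{I,j})=1$ whenever $k\geq 2$ --- in particular at every point where two of $E_0=\widehat H,E_1,\dots,E_s$ meet --- while for $I=\{i\}$ with $i\in\{1,\dots,s\}$ the lemma together with the definition of $m_{i,j}=m(\check E_{i,j})$ gives $Z(g,x_{I,j})=1-t^{m_{i,j}}$. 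Substituting, all factors other than those indexed by the strata $\check E_{i,j}$ are trivial and one obtains $Z(f)=\prod_{i=1}^{s}\prod_{j\in J}\big(1-t^{m_{i,j}}\big)^{\chi(\check E_{i,j})}$, which is part~(2).

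Finally, (1) and (3) follow by formal manipulation. Taking $\log$ in the product formula, using $-\log(1-t^{m})=\sum_{\ell\geq 1}t^{m\ell}/\ell$ and comparing with $\log Z(f)=-\sum_{k\geq 1}\Lambda(h^k)t^k/k$ from~(\ref{zeta_numLef}), the coefficient of $t^k$ gives $\Lambda(h^k)=\sum_{i,j,\;m_{i,j}\mid k}m_{i,j}\,\chi(\check E_{i,j})$; since the geometric monodromy is a homeomorphism each $\det(\id-tH^q(h))$ has degree $\dim H^q(F)$, so $\chi(F)=\deg Z(f)=\sum_{i,j}m_{i,j}\,\chi(\check E_{i,j})$, and with $\chi(F)=\Lambda(h^0)$ this completes~(1). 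In the isolated case the Milnor fibre of a hypersurface singularity in a $V$-manifold is still $(n-1)$-connected, hence homotopy equivalent to a bouquet of $\mu$ copies of $S^n$, so $Z(f)=(1-t)\,\det(\id-tH^n(h))^{(-1)^n}$; solving for $\det(\id-tH^n(h))$ and rewriting $1-t^{m_{i,j}}=-(t^{m_{i,j}}-1)$, $1-t=-(t-1)$ yields the stated formula for $\Delta(t)$, while $\mu=\dim\widetilde H^n(F)=(-1)^n(\chi(F)-1)=(-1)^n\big[-1+\sum_{i,j}m_{i,j}\,\chi(\check E_{i,j})\big]$ gives the Milnor number, completing~(3).
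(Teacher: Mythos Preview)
Your proposal is correct and follows essentially the same route as the paper: build the stratification $\{E_I^{\circ}\cap Q_j\}$, check equivariant constructibility via the local product structure, apply Theorem~\ref{th_dimca}, and use Lemma~\ref{lemma_zetaMon} to discard all strata with $|I|\geq 2$. You supply more detail than the paper on the constructibility step and on the derivation of (1) and (3) from (2), which the paper dismisses with ``the other items follow from this one''; note also that your condition $m_{i,j}\mid k$ in part~(1) is the correct one (it matches the classical formula in the introduction), whereas the displayed statement in the theorem has the divisibility reversed by a typo.
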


\begin{proof}
Only the proof of (2) is given; the other items follow from this one.
Using that $E_0 = \widehat{H}$ and $S = \{ 0,1,\ldots,s \}$, the support of the total transform can be written as
$$
\pi^{-1}(H) = \widehat{H} \cup \pi^{-1}(0) = \bigcup_{i \in S} E_i.
$$

Let $X = \bigsqcup_{I \subseteq S} E_{I}^{\circ}$ be the stratification of $X$ given in~(\ref{strat_emb_res}) associated with this $\mathbb{Q}$-normal crossing divisor.
This partition gives rise to a stratification on $\pi^{-1}(0) = \bigsqcup E_{I}^{\circ}$, where the intersection is taken over $$I\in \mathcal{P}(S) \setminus \{ \emptyset, \{ 0 \} \}.$$

However, the equivariant property is not satisfied in general, since the strata may contain singular points of $X$. Instead, let $\mathcal{S}$ be the following finer stratification
$$
  \mathcal{S} = \Big\{ E_{I}^{\circ} \cap Q_j \Big\}_{\begin{subarray}{c} I\subset S,\,\, j\in J\\ I\neq\, \emptyset,\, \{0\} \end{subarray}}.
$$

Now the family $\mathcal{S}$ is a finite stratification of the exceptional divisor of $\pi$ such that the complex $\psi_{f\circ \pi} ( Rj_{*} \underline{\C}_{\,X\setminus \pi^{-1}(H)} )$ is equivariantly $\mathcal{S}$-constructible, where $$j: X\setminus \pi^{-1}(H) \lhook\joinrel\longrightarrow X$$ is the inclusion. Hence Theorem~\ref{th_dimca} applies. Moreover, given $q\in \pi^{-1}(0)$ there exist $I=\{i_1,\ldots,i_k\} \subset S$, $k\geq 1$ ($k=1 \Rightarrow i_1 \neq 0$), and $j\in J$ such that the local equation of $g = f\circ \pi$ at $q$ is given by the function
$$
x_{i_1}^{m_{i_1}} \cdot \ldots \cdot x_{i_k}^{m_{i_k}} :\, B_j/G_j \longrightarrow \C.
$$

The numbers $m_{i_j}$'s and the action $G_j$ are the same along each stratum of~$\mathcal{S}$. By Lemma~\ref{lemma_zetaMon}, the strata with $k \geq 2$ do not contribute to the monodromy zeta function.

Take $x_T = x_{I,j}$ an arbitrary point in $E_I^\circ \cap Q_j$, then from the previous discussion one has
\begin{align*}
  Z(f) & = \prod_{T \in \mathcal{S}} Z(g, x_T) = \prod_{\begin{subarray}{c} I\subset S,\,\, j\in J\\ I\neq\, \emptyset,\, \{0\} \end{subarray}} Z(g, x_{I,j})^{\chi(E_I^\circ \cap Q_j)} \\ & = \prod_{\begin{subarray}{c} i = 1,\ldots,s \\ j\in J \end{subarray}} Z(g, x_{\{i\},j})^{\chi(E_{\{i\}}^{\circ} \cap Q_j)} = \prod_{\begin{subarray}{c} i = 1,\ldots,s \\ j\in J \end{subarray}} (1 - t^{m_{i,j}})^{\chi( \check{E}_{i,j} )}.
\end{align*}

Above, Lemma~\ref{lemma_zetaMon} is used again for the computation of the monodromy zeta function at~$x_{\{i\},j}$. Observe also that $E_{\{i\}}^\circ \cap Q_j = \check{E}_{i,j}$. Now the proof is complete.
\end{proof}

This theorem has already been proven by Veys in~\cite{Veys97} for plane curve singularities, that is, for $n=1$. If all $d$'s are equal to one, then $\pi:X\to (\C^{n+1},0)$ is an embedded resolution of $(H,0)$ in the classical sense and one obtains exactly the formula by A'Campo~\cite{ACampo75}.

\begin{remark}
Let $X = \bigsqcup_{j\in J} Q'_j$ be another finite stratification of $X$ such that the function $q \in \check{E}_{i} \cap Q'_j \longmapsto m(E_{i},q)$ is constant. Then, the previous theorem still holds replacing $\check{E}_{i,j} = \check{E}_i \cap Q_j$ by $\check{E}_i \cap Q'_j$.
\end{remark}

\begin{remark}\label{simplification_norm}
When $\Sing(M) \subset H$ then $M\setminus H$ is smooth and thus so is $X\setminus \pi^{-1}(H)$. Consequently, all singularities of $X$ are contained in the total transform $\pi^{-1}(H)$ and the numbers $m_{i,j}$'s take the simple form
$$
m_{i,j} = \frac{m}{\lcm(d_1,\ldots,d_r)},
$$
after having normalized the types involved in the corresponding embedded $\Q$-resolution of the singularity, cf.~Remark~\ref{easier_dprime}.
\end{remark}

\section{Applications and Examples}\label{app_ex}

The following result is nothing but a reformulation of Theorem~\ref{ATH}, adopted to the situation, which is encountered in the examples.

\begin{theo}\label{ATH2}
Let $f:(\C^{n+1},0) \to (\C,0)$ be a non-constant analytic function germ defining an isolated singularity and let $H = \{ f = 0\}$. Assume $\pi:X \to (\C^{n+1},0)$ is an embedded $\Q$-resolution of $(H,0)$, having $X$ cyclic quotient singularities. Let $X_0 = \pi^{-1}(H)$ be the total transform and $S=\pi^{-1}(0)$ the exceptional divisor. Consider $S_{m,d'}$ to be the set
$$
\left\{\begin{tabular}{l|l}
\multirow{3}{*}{$s\in S$\ } &\ the local equation of $X_0$ in $s$ is given by the well-defined
\\ &\ function $x_i^m: X{(d;a_0,\ldots,a_n)} \to \C$, where $x_i$ is a local \\ &\ coordinate of $X$ in $s$, and $d / \gcd(d,a_i) = d'$.
\end{tabular}\right\}.
$$

Then, the characteristic polynomial of the complex monodromy of the hypersurface $(H,0)$ is
\begin{equation}\label{Acampo_equation}
  \Delta(t) = \Bigg[ \frac{1}{t-1} \prod_{m,d'} ( t^{ m/d' } - 1 )^{\chi(
S_{m,d'} )} \Bigg]^{(-1)^n}.
\end{equation}
\end{theo}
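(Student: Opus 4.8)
The plan is to derive Theorem~\ref{ATH2} as a direct specialization of Theorem~\ref{ATH}, tracking how the general product over pairs $(i,j)$ collapses to a product over pairs $(m,d')$ of local data. First I would recall the general formula
\[
  Z(t) = \prod_{i,j} \left( 1 - t^{m_{i,j}} \right)^{\chi(\check{E}_{i,j})},
\]
from part (2) of Theorem~\ref{ATH}, and note that since we are in the isolated case, part (3) gives
\[
  \Delta(t) = \left[ \frac{1}{t-1} \prod_{i,j} \left( t^{m_{i,j}} - 1 \right)^{\chi(\check{E}_{i,j})} \right]^{(-1)^n}.
\]
So the content of Theorem~\ref{ATH2} is purely the combinatorial identification of the index set: instead of indexing by irreducible components $E_i$ and pieces $Q_j$ of an auxiliary stratification, one indexes directly by the possible values of the pair (multiplicity $m$ of the total transform, normalized denominator $d' = d/\gcd(d,a_i)$) as $s$ ranges over $\pi^{-1}(0)$.

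The key step is to observe that since $X$ has only cyclic quotient singularities and since we may take the stratification $X = \bigsqcup_j Q_j$ refined by the quotient-singularity type, the local equation of $X_0 = \pi^{-1}(H)$ at a point $s$ in a single component $E_i$ is $x_i^m : X(d; a_0,\ldots,a_n) \to \C$, and by the Definition of multiplicity the number $m_{i,j}$ attached to the stratum $\check{E}_{i,j}$ through $s$ equals $m/\l_i$ where $\l_i = d/\gcd(d,a_i) = d'$ (the lcm degenerates to a single term because the group is cyclic, $r=1$). Hence $m_{i,j} = m/d'$ depends on the stratum only through the pair $(m,d')$. Grouping the strata $\check{E}_{i,j}$ according to the common value of $(m,d')$, and using additivity of the Euler characteristic, $\chi(S_{m,d'}) = \sum \chi(\check{E}_{i,j})$ where the sum runs over those $(i,j)$ whose associated pair is $(m,d')$, the product over $(i,j)$ rearranges into the product over $(m,d')$:
\[
  \prod_{i,j} \left( t^{m_{i,j}} - 1 \right)^{\chi(\check{E}_{i,j})} = \prod_{m,d'} \left( t^{m/d'} - 1 \right)^{\chi(S_{m,d'})}.
\]
Substituting into the formula for $\Delta(t)$ from Theorem~\ref{ATH}(3) yields exactly~\eqref{Acampo_equation}.

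The main obstacle — really the only subtlety — is checking that the set $S_{m,d'}$ is well-defined and its Euler characteristic is meaningful, i.e.\ that the assignment $s \mapsto (m, d')$ is constructible with finitely many values and that it is compatible with (a refinement of) the stratification used in Theorem~\ref{ATH}. Here I would invoke the Remark following Theorem~\ref{ATH}: the theorem holds with $\check{E}_{i,j} = \check{E}_i \cap Q_j$ replaced by $\check{E}_i \cap Q'_j$ for any finite stratification $\{Q'_j\}$ along which $q \mapsto m(E_i,q)$ is constant. Choosing $\{Q'_j\}$ to be a common refinement on which both $m$ and the local type $(d;a_0,\ldots,a_n)$ (hence $d' = d/\gcd(d,a_i)$) are constant, the strata $S_{m,d'}$ are finite unions of such pieces, $\chi(S_{m,d'})$ is well-defined by additivity, and the rearrangement above is justified. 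One should also note that $\check{E}_0 = \check{\widehat{H}}$ need not appear in $S_{m,d'}$ since in the isolated case $\chi(\check{\widehat{H}})$ is absorbed into the $(t-1)$ factor exactly as in the classical A'Campo formula, which is the reason the normalization $\frac{1}{t-1}$ appears out front; this matches the bookkeeping already carried out in the proof of Theorem~\ref{ATH}.
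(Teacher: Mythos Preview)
Your proposal is correct and matches the paper's own treatment: the paper does not give a separate proof of Theorem~\ref{ATH2} but simply declares it ``a reformulation of Theorem~\ref{ATH}, adopted to the situation,'' and your argument is precisely the fleshed-out version of that reformulation (specialize to cyclic groups so that $\ell_i = d/\gcd(d,a_i) = d'$, then regroup the product by the value of the pair $(m,d')$ using additivity of $\chi$). One minor quibble: your final remark about $\check{\widehat{H}}$ being ``absorbed into the $(t-1)$ factor'' is not quite the right bookkeeping---the strict transform is already excluded from the product in Theorem~\ref{ATH} (the index $i$ runs over $1,\ldots,s$), and the $\frac{1}{t-1}$ comes from the general passage $Z(t)\rightsquigarrow\Delta(t)$ in the isolated case---but this does not affect the argument.
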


\begin{remark}\label{easier_dprime}
If all cyclic quotient singularities appearing in $X$ are written in their normalized form and $\gcd(d,a_i) \neq 1$, then the space $X\setminus X_0$ must contain singular points. This, however, contradicts that $\pi$ is an embedded $\Q$-resolution. Therefore after normalizing, one can always assume that $d=d'$, cf.~Remark~\ref{simplification_norm}.
\end{remark}

\begin{ex}\label{ex_onePuiseuxPair}
Let $f:\C^2 \to \C$ be the function given by $f=x^p+y^q$ and assume that $e=\gcd(p,q)$, $p=p_1 e$ and $q=q_1 e$. Consider $\pi: \widehat{\C}^2{(q_1,p_1)} \to \C^2$ the weighted blow-up at the origin of type $(q_1,p_1)$. Recall that $\widehat{\C}^2{(q_1,p_1)} = U_0\cup U_1$ has two singular points corresponding
to the origin of each chart.

In $U_0 = X(q_1;-1,p_1)$ the total transform of $f$ is given by the function $x^{p_1 q_1 e} (1+y^q)$. The equation $y^q=-1$ only has $q/q_1=e$ different solutions in $U_0$ and the local equation of the total transform at each of theses points is of the form $x^{p_1 q_1 e}\, y$.

Hence the proper map $\pi$ is an embedded $\Q$-resolution of ${\bf C}=\{f=0\}$ where all spaces are written in their normalized form.

\begin{figure}[h t]
\centering\includegraphics{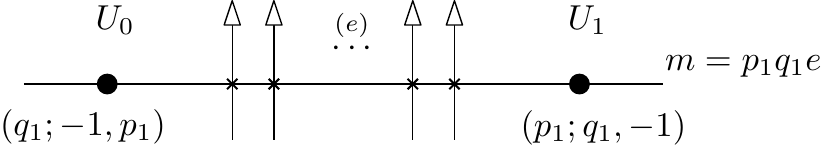}
\caption{Embedded $\Q$-resolution of $f = x^p + y^q$.}
\end{figure}

The set $S_{m,d}$ is not empty for $(m,d)= (p_1 q_1 e,1)$, $(p_1 q_1 e, q_1)$, $(p_1 q_1 e, p_1)$. Their Euler characteristics are
$$
\chi(S_{p_1 q_1 e,1}) = 2-(e+2)=-e, \qquad \chi(S_{p_1 q_1 e, q_1}) =
\chi(S_{p_1 q_1 e, p_1}) = 1.
$$

Now, we apply Theorem~\ref{ATH2} and obtain
$$
  \Delta(t) = \frac{(t-1)(t^{\frac{p q}{e}}-1)^e}{(t^{p}-1)(t^{q}-1)}.
$$

Another interesting way to calculate the characteristic polynomial could be the following. Consider $\pi: \widehat{\C}^2(q,p) \to \C$ the blow-up at the origin of type $(q,p)$. Now, $U_0 = X(q;-1,p)$ and the equation of the total transform in this chart is $x^{p q} (1+y^q)$. As above, the map $\pi$ is an embedded $\Q$-resolution of ${\bf C}$ and our formula can be applied. However, the exceptional divisor, outside the two singular points, is not given by $x^{p q}$ as one can expect at first sight. The reason is that $X(q;-1,p)$ is not written in a normalized form.

The isomorphism $X(q;-1,p) \cong X(q_1;-1,p_1)$ sends the function $x^{p q}:X(q;-1,p)\to \C$ to $x^{\frac{p q}{e}}: X(q_1;-1,p_1) \to \C$, and thus the required equation is $x^{\frac{p q}{e}} : \C^2 \to \C$. After applying the formula one obtains the same characteristic polynomial.

\begin{figure}[h t]
\centering
\includegraphics{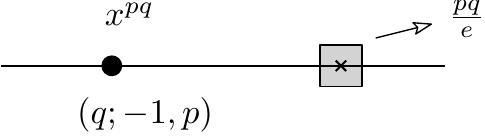}
\caption{Non-normalized cyclic quotient singularity.}
\end{figure}

This example shows that although one can blow up using non coprime weights, if possible, it is better to do it with the corresponding coprime weights to simplify calculations. However, the normalized condition is not necessary in the hypothesis of the statement.
\end{ex}

\begin{ex}
Assume $p_1/q_1 < p_2/q_2$ are two irreducible fractions and $\gcd(q_1,q_2) = 1$. Let~${\bf C}$ be the complex plane curve with Puiseux expansion
$$
y=x^{\frac{p_1}{q_1}}+x^{\frac{p_2}{q_2}}.
$$

Consider $\pi_1: \widehat{\C}^2(q_1,p_1) \to \C^2$ the weighted blow-up at the origin of type $(q_1,p_1)$. The exceptional divisor $\E_0$ has multiplicity $p_1 q_1 q_2$ and contains two singular points of type $(q_1;-1,p_1)$ and $(p_1;q_1,-1)$. The strict transform $\widehat{{\bf C}}$ of the curve and $\E_0$ intersect at one smooth point, say $P$. The Puiseux expansion of $\widehat{{\bf C}}$ in a small neighborhood of this point is
$$
  y = x^\frac{p_2 q_1 - p_1 q_2}{q_2},
$$
and thus $\pi_1$ is not a $\Q$-resolution.

\begin{figure}[h t]
\centering
\includegraphics{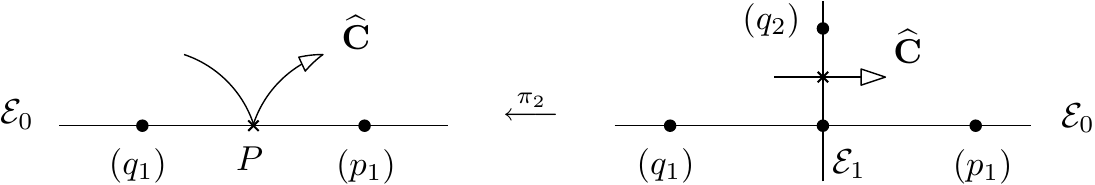}
\caption{Embedded $\Q$-resolution of ${\bf C} = \{ y=x^{\frac{p_1}{q_1}}+x^{\frac{p_2}{q_2}} \} $.}
\label{fig_acampo3}
\end{figure}

Now let $\pi_2$ be the weighted blow-up at $P$ of type $(q_2,p_2 q_1 - p_1 q_2)$. The multiplicity of the new exceptional divisor $\E_1$ is $q_2(p_1 q_1 q_2 + p_2 q_1 - p_1 q_2)$. It intersects transversally $\E_0$ at a singular point of type $$(p_2 q_1 - p_1 q_2;\, q_2,-1)$$ and also contains another singular point of type $(q_2;-1,p_2 q_1)$. The strict transform of the curve is a smooth variety and cuts transversally $\E_1$ at a smooth point.

Hence the composition $\pi_1\circ\pi_2$ defines an embedded $\Q$-resolution of ${\bf C}\subset \C^2$ where all cyclic quotient spaces are written in their normalized form. Figure~\ref{fig_acampo3} illustrates the whole process.

The corresponding Euler characteristics are $\chi(\E_0\setminus\{3\text{ points}\}) = \chi(\E_1\setminus\{3\text{ points}\}) = -1$ and $\chi = 1$ for the three singular points. Note that the singular point of type $(p_2 q_1 - p_1 q_2)$ does not contribute to the monodromy zeta function, since it belongs to more than one divisor. After applying formula~\eqref{Acampo_equation}, one obtains
$$
  \Delta(t) = \frac{\big(t-1\big)\big(t^{p_1 q_1 q_2}-1\big)\big(t^{q_2(p_1 q_1 q_2 + p_2 q_1 - p_1 q_2)}-1\big)}{\big(t^{p_1 q_2}-1\big)\big(t^{q_1 q_2}-1\big)\big(t^{p_1 q_1 q_2 + p_2 q_1 - p_1 q_2}-1\big)},\qquad \mu = \deg \Delta(t).
$$

In case $q_1$ and $q_2$ are not coprime, the same arguments apply and one can find a formula for the characteristic polynomial of an irreducible plane curve with two (and then with arbitrary) Puiseux pairs. These formulas are quite involved and we omit them.
\end{ex}

\begin{ex}\label{ex_chi}
Let $e_1, e_2, e_3$ be three positive integers and denote $e=\gcd(e_1,e_2,e_3)$. Assume that $\w=(\frac{e_1}{e},\frac{e_2}{e},\frac{e_3}{e})$ is a weight vector of pairwise relatively prime numbers. Let ${\bf C}$ be the projective curve in~$\P^2_\w$ defined by the polynomial
$$
  F=x^{\frac{e_2 e_3}{e}}+y^{\frac{e_1 e_3}{e}}+z^{\frac{e_1 e_2}{e}}.
$$
Note that this polynomial is quasi-homogeneous of degree $e_1 e_2 e_3/e^2$. One is interested in computing the Euler characteristic of ${\bf C}$.

Consider $\pi: \widehat{\C}^3_{\w} \to \C^3$ the weighted blow-up at the origin with respect to $\w$ and take the affine variety $H=\{F=0\}\subset\C^3$. The space~$\widehat{\C}^3_{\w} = U_0 \cup U_1 \cup U_2$ has just three singular points, corresponding to the origin of each chart, and located at the exceptional divisor $E=\pi^{*}(0)\cong \P^2_\w$. The order of the cyclic groups are $\frac{e_3}{e}$, $\frac{e_2}{e}$ and $\frac{e_1}{e}$ respectively.

In the third chart $U_2 = X(\frac{e_3}{e};\frac{e_1}{e},\frac{e_2}{e},-1)$ the equation of the total transform is
$$
  z^{\frac{e_1 e_2 e_3}{e^2}}(x^{\frac{e_2 e_3}{e}}
  +y^{\frac{e_1 e_3}{e}}+1).
$$
One sees that the exceptional divisor and the strict transform are smooth varieties intersecting transversally. Thus $\pi$ is an embedded $\Q$-resolution of $H$ where all the quotient spaces are written in a normalized form.

\begin{figure}[h t]
\centering
\includegraphics{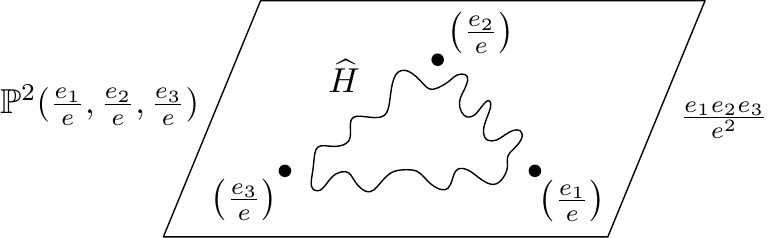}
\caption{Embedded $\Q$-resolution of $F = x^{\frac{e_2 e_3}{e}} + y^{\frac{e_1 e_3}{e}} + z^{\frac{e_1 e_2}{e}}$.}
\end{figure}

The set $S_{m,d}$ is not empty for $m=e_1 e_2 e_3/e^2$ and $d\in\{1,\frac{e_1}{e}, \frac{e_2}{e}, \frac{e_3}{e}\}$. Since the intersection
$E\cap \widehat{H}$ can be identified with ${\bf C}$, the Euler characteristics are
$$
\chi(S_{m,1})=-\chi({\bf C}), \qquad \chi(S_{m,\frac{e_1}{e}}) =
\chi(S_{m,\frac{e_2}{e}}) =
\chi(S_{m,\frac{e_3}{e}}) = 1.
$$

From Theorem~\ref{ATH2}, the characteristic polynomial of $H$ is
$$
  \Delta(t) = \frac{\big(t^{\frac{e_1 e_2}{e}}-1\big)\big(t^{\frac{e_1
e_3}{e}}-1\big)
  \big(t^{\frac{e_2 e_3}{e}}-1\big)}
  {\big(t-1\big)\big(t^{\frac{e_1 e_2 e_3}{e^2}}-1\big)^{\chi({\bf C})}}.
$$

On the other hand, the Milnor number is well-known to be $\mu = (\frac{e_1 e_2}{e}-1)
(\frac{e_1 e_3}{e}-1)(\frac{e_2 e_3}{e}-1)$. Using that $\mu = \deg \Delta(t)$
one finally obtains
$$
  \chi({\bf C}) = e_1 + e_2 + e_3 - \frac{e_1 e_2 e_3}{e}.
$$
\end{ex}

\begin{ex}\label{Brieskorn-Pham_surfaces}
Let $p,q,r$ be three positive integers and consider $f:\C^3\to \C$ the polynomial function given by
$$
 f=x^p+y^q+z^r.
$$

To simplify notation we set $e_1=\gcd(q,r)$, $e_2=\gcd(p,r)$, $e_3 = \gcd(p,q)$, $e=\gcd(p,q,r)$, and $k = e_1 e_2 e_3$. The following information will be useful later.
$$
\begin{array}{l c l}
\multicolumn{3}{c}{\gcd(q r,p r,p q) = \displaystyle\frac{e_1 e_2 e_3}{e} =
\frac{k}{e},}\\[0.3cm]
\displaystyle d_1:= \gcd \left(\frac{e p r}{k},\frac{e p q}{k}\right) = \frac{e p}{e_2
e_3}\,;
& \ & \displaystyle a_1:=\lcm (d_2,d_3) = \frac{e^2q r}{e_1 k} = d_2
d_3\,,\\[0.35cm]
\displaystyle d_2:= \gcd \left(\frac{e q r}{k},\frac{e p q}{k}\right) = \frac{e q}{e_1
e_3}\,;
& \ & \displaystyle a_2:=\lcm (d_1,d_3) = \frac{e^2p r}{e_2 k}\,,\\[0.35cm]
\displaystyle d_3:= \gcd \left(\frac{e q r}{k},\frac{e p r}{k}\right) = \frac{e r}{e_1
e_2}\,;
& \ & \displaystyle a_3:=\lcm (d_1,d_2) = \frac{e^2p q}{e_3 k}\,.\\
\end{array}
$$

Take the weight vector $\w = \frac{e}{k} (q r,p r,p q)$ and let $\pi:\widehat{\C}^3_\w \to \C^3$ be the weighted blow-up at the origin with respect to $\w$. The new space $\widehat{\C}^3_{\w} = U_0 \cup U_1 \cup U_2$ has three lines (each of them isomorphic to $\mathbb{P}^1$) of singular points located at the exceptional divisor $E=\pi^{-1}(0)\cong\mathbb{P}^2_\w$. They actually coincide with the three lines $L_0, L_1, L_2$ at infinity of~$\mathbb{P}^2_\w$.

\begin{figure}[h t]
\centering
\includegraphics{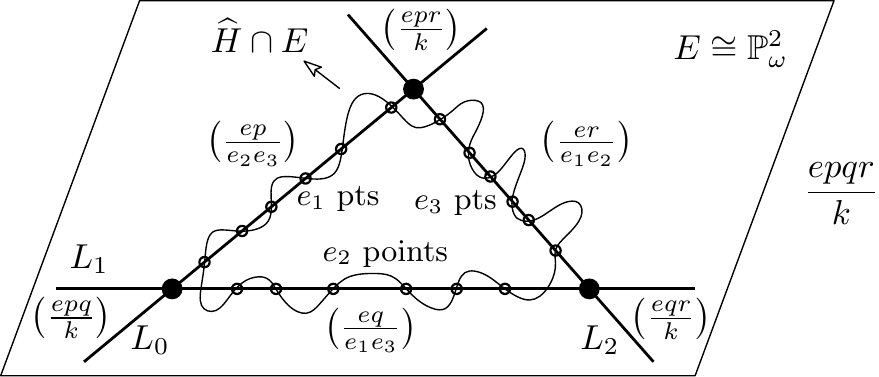}
\caption{Embedded $\Q$-resolution of $f=x^p+y^q+z^r$.}
\end{figure}

In the third chart $U_2=X(\frac{e p q}{k}; \frac{e q r}{k},\frac{e p r}{k},-1)$, an equation of the total transform is
$$
z^{\frac{e p q r}{k}}(x^p+y^q+1),
$$
where $z=0$ is the exceptional divisor and the other equation corresponds to the strict transform.

Working in this coordinate system, one sees that the line $L_0$ (resp.~$L_1$) and $\widehat{H}$ intersect at exactly $e_1$ (resp.~$e_2$) points. Analogously, $L_2\cap\widehat{H}$ consists of $e_3$ points. Moreover, one has that $\widehat{H}$ and $E$ are smooth varieties that intersect transversally. Hence the map $\pi$ is an embedded $\Q$-resolution of $\{f=0\} \subset \C^3$ where all the cyclic quotient spaces are presented in normalized form.

The Euler characteristics as well as the fractions $m/d$ for the nonempty sets $S_{m,d}$ are calculated in the table below.
$$
\begin{array}{c|c|c|c|c|}\cline{2-5}
&&&&\\[-0.2cm]
& S_{\frac{e p q r}{k},1} & S_{\frac{e p q r}{k},\frac{e p}{e_2 e_3}} &
S_{\frac{e p q r}{k},\frac{e q}{e_1 e_3}} & S_{\frac{e p q r}{k},\frac{e r}{e_1
e_2}} \\[0.3cm] \hline
\multicolumn{1}{|c|}{}&&&& \\[-0.2cm]
\multicolumn{1}{|c|}{\displaystyle\frac{m}{d}} & \displaystyle\frac{e p q r}{k}
&
\displaystyle\frac{q r}{e_1} &
\displaystyle\frac{p r}{e_2} & \displaystyle\frac{p q}{e_3} \\[0.3cm]
\hline
\multicolumn{1}{|c|}{\chi} & \begin{array}{c} e_1+e_2+e_3\\ -\chi({\bf
C})\end{array}
& -e_1 & -e_2 & -e_3 \\[0.0cm] \hline
\end{array}\vspace{0.25cm}
$$
$$
\begin{array}{c|c|c|c|}\cline{2-4}
&&&\\[-0.2cm]
& S_{\frac{e p q r}{k},\frac{e q r}{k}} & S_{\frac{e p q r}{k},\frac{e p r}{k}} &
S_{\frac{e p q r}{k},\frac{e p q}{k}} \\[0.3cm] \hline
\multicolumn{1}{|c|}{}&&& \\[-0.3cm]
\multicolumn{1}{|c|}{m/d} & p & q & r \\[0.2cm]
\hline
\multicolumn{1}{|c|}{}&&& \\[-0.3cm]
\multicolumn{1}{|c|}{\chi} & 1 & 1 & 1 \\[0.1cm] \hline
\end{array}\vspace{0.25cm}
$$

Here we denote by ${\bf C}$ the variety in $\mathbb{P}^2_{\w}$ defined by the $\w$-homogeneous polynomial $x^p+y^q+z^r$. Recall that the map $\P^2_{\w} \rightarrow \P^2{(\frac{e_1}{e},\frac{e_2}{e},\frac{e_3}{e})}$ given by
$$
  [x:y:z]_{\w} \longmapsto [x^\frac{e p}{e_2 e_3}: y^\frac{e q}{e_1 e_3}:
  z^\frac{e r}{e_1 e_2}]_{(\frac{e_1}{e},\frac{e_2}{e},\frac{e_3}{e})}
$$
is an isomorphism and maps the hypersurface ${\bf C}$ to $\{x^{\frac{e_2 e_3}{e}}+y^{\frac{e_1 e_3} {e}}+z^{\frac{e_1 e_2}{e}}=0\}$. By the preceding example its Euler characteristic is
$$
  \chi({\bf C}) = e_1 + e_2 + e_3 - \frac{e_1 e_2 e_3}{e},
$$
and finally, from Theorem~\ref{ATH2}, one obtains the characteristic polynomial of~$f$,
$$
  \Delta(t) = \frac{\big(t^\frac{e p q r}{e_1 e_2 e_3}-1\big)^\frac{e_1 e_2
e_3}{e}
  \big(t^p-1\big)\big(t^q-1\big)\big(t^r-1\big)}
  {\big(t-1\big)\big(t^\frac{q r}{e_1}-1\big)^{e_1}
  \big(t^\frac{p r}{e_2}-1\big)^{e_2} \big(t^\frac{p q}{e_3}-1\big)^{e_3}}.
$$
\end{ex}

Note that the Euler characteristic of ${\bf C}$ could also be obtained using that the Milnor number is $\mu = (p-1)(q-1)(r-1) = \deg\Delta(t)$, as in the previous example.

\begin{ex}
Let $f:\C^3\to \C$ be the polynomial function defined by $f = z^{m+k} + h_m(x,y,z)$. Assume that ${\bf C} = \{h_m = 0\} \subseteq \P^2$ has only one singular point $P = [0:0:1]$, which is locally isomorphic to the cusp $x^q+y^p$, $\gcd(p,q)=1$. Denote $k_1 = \gcd(k,p)$ and $k_2 = \gcd(k,q)$.

Consider the classical blow-up at the origin $\pi_1:\widehat{\C}^3 \to \C^3$. In the third chart, the local equation of the total transform is
$$
  z^m (z^k + x^q + y^p) = 0.
$$
The strict transform $\widehat{H}$ and the exceptional divisor $E_0$ intersect transversally at every point but in $P\in {\bf C} \equiv E_0\cap \widehat{H}$. Also $\widehat{H}\setminus P$ is smooth.

One is therefore interested in the blowing-up at the point $P$ with respect to $(k p,k q,p q)$. However, in order to obtain cyclic quotient spaces in normalized form, it is more suitable to choose $\w = (\frac{k p}{k_1 k_2}, \frac{k q}{k_1 k_2}, \frac{p q}{k_1 k_2})$ instead. Let $\pi_2$ be the weighted blow-up at $P$ with respect to the vector $\w$. The local equation of the total transform in the second chart is given by
$$
  \left\{ y^{\frac{p q}{k_1 k_2}(m+k)} z^m (z^k + x^q + 1) = 0\right\} \subset
  X\left(\frac{k q}{k_1 k_2}; \frac{k p}{k_1 k_2}, -1, \frac{p q}{k_1
k_2}\right),
$$
where $y=0$ represents the new exceptional divisor $E_1$.

The composition $\pi = \pi_1 \circ \pi_2$ is an embedded $\Q$-resolution. The final situation is illustrated in Figure~\ref{fig_acampo6}.

\begin{figure}[h t]
\centering
\includegraphics{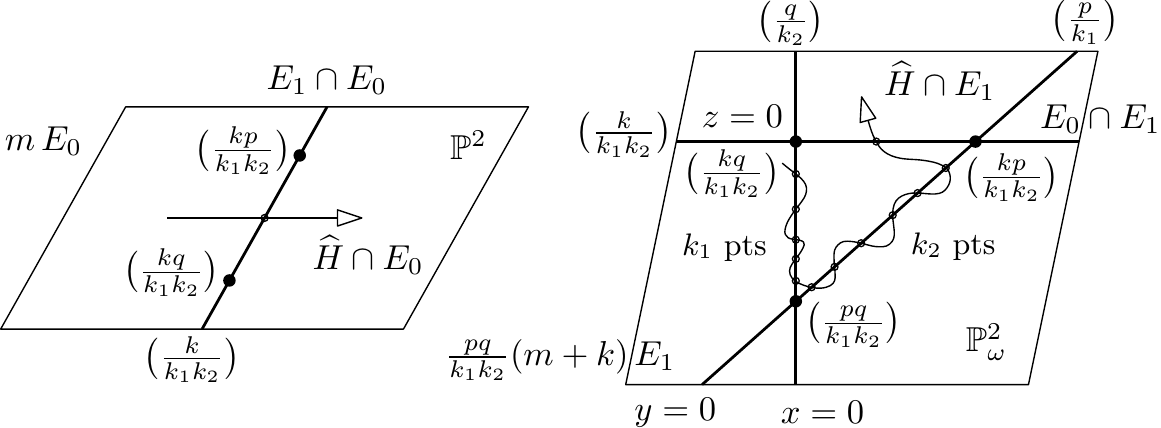}
\caption{Intersection of $E_0$ (resp.~$E_1$) with the rest of components.}
\label{fig_acampo6}
\end{figure}

The sets for which the Euler characteristic has to be computed are
$$
  S_{m,1},\quad S_{\l,1},\quad S_{\l,\frac{p}{k_1}},\quad S_{\l,\frac{q}{k_2}},
  \quad S_{\l,\frac{p q}{k_1 k_2}};\qquad \l= \frac{p q}{k_1 k_2}(m+k).
$$
Clearly $\chi(S_{\l,\,p q/k_1 k_2}) = 1$, $\chi(S_{\l,\,p/k_1})= - k_2$ and $\chi(S_{\l,\,q/k_2}) = - k_1$, since they are homeomorphic to a point, $\P^1\setminus\{k_2+2 \text{ points}\}$ and $\P^1\setminus\{k_1+2 \text{points}\}$ respectively. The set $S_{m,1}$ is $\P^2\setminus {\bf C}$. Finally, we use the additivity of the Euler characteristic to compute $\chi(S_{\l,1})$.

Indeed, let ${\bf D}\subset \P^2(k_1,k_2,1)$ be the projective variety defined by the equation $z^{k_1 k_2} + x^{k_2} + y^{k_1} = 0$. Note that ${\bf D}$ is isomorphism to
$$
\widehat{H}\cap E_1 = \{ z^k + x^q + y^p = 0 \} \subset \P^2_{\w}
$$
and, by Example~\ref{ex_chi} (using $e_1=k_1$, $e_2=k_2$, $e_3=1$), its Euler characteristic is $k_1 + k_2 + 1 - k_1 k_2$. Then,
$$
  \chi(S_{\l,1}) = 3 - (2+2+2+\chi({\bf D})) + k_1 + k_2 + 4 = k_1 k_2.
$$

Every cyclic quotient singularity is written in a normalized form and thus the generalized A'Campo's formula can be applied with $d'=d$,
\begin{align*}
  \Delta(t) & = \frac{\big(t^m-1\big)^{\chi(\P^2\setminus{\bf C})}}{t-1} \cdot
  \frac{\big(t^{m+k}-1\big)\big(t^{\frac{p q}{k_1 k_2}(m+k)}-1\big)^{k_1 k_2}}
  {\big(t^{\frac{p}{k_1}(m+k)}-1\big)^{k_1}
  \big(t^{\frac{q}{k_2}(m+k)}-1\big)^{k_2}} \\ & =
  \frac{\big(t^m-1\big)^{\chi(\P^2\setminus{\bf C})}}{t-1}
\cdot  \Delta^k_{P}(t^{m+k}).
\end{align*}

Let us explain the notation. The symbol $\Delta_P(t)$ denotes the characteristic polynomial of ${\bf C}$ at $P=[0:0:1]$, where the curve is locally isomorphic to $x^q+y^p$, and if $\Delta (t) = \prod_i (t^{m_i}-1)^{a_i})$, then $\Delta^k(t)$ denotes
$$
  \Delta^k(t) = \prod_i (t^{\frac{m_i}{\gcd(m_i,k)}} - 1)^{\gcd(m_i,k) a_i}.
$$
\end{ex}

\begin{remark}
Using these techniques an embedded $\Q$-resolution associated with the family of examples $z^{m+k} + h_m(x,y,z)$, where $h_m$ defines an arbitrary projective curve in $\P^2$ such that $\Sing(h_m)\cap \{ z = 0\} = \emptyset$ in $\P^2$, can be computed. In particular, the formulas by D.~Siersma~\cite{Siersma90} and J.~Stevens~\cite{Stevens89} for the characteristic polynomial of Yomdin-L\^{e} surface singularities can be obtained in this way. They is not presented explicitly because it not the purpose of this paper.
Note that this family of singularities has also been extensively studied by E.~Artal \cite{Artal94} and I.~Luengo \cite{Luengo87}.
\end{remark}

We conclude this section by emphasizing that in the classical A'Campo's formula one has to pay attention to compute the Euler characteristic while the multiplicities remain trivial. Using our formula we also have to take care of computing the multiplicities and the order of the corresponding cyclic groups, especially when the quotient singularity is not in a normalized form.

\section{Zeta Function of Not-Well-Defined Functions}\label{not-well-defined}

In what follows the monodromy zeta function associated with not-well-defined functions over $M = X(\bd;A)$ is studied. Assume $f\in \C[x_1,\ldots,x_n]$ is a polynomial such that the following condition holds, $P\in \C^n$,
$$
  f(P) = 0 \ \Longrightarrow \ f(\bxi_\bd \cdot P) = 0,\ \forall \bxi_\bd \in \mu_\bd.
$$
Then the zero-set $\{ [{\bf x}]\in M \mid f({\bf x}) = 0 \} =: \{f=0\} \subset M$ is well defined, although~$f$~may not induce a function over $M$.

\begin{prop}\label{prop_not_well-defined}
Let $f\in \C[x_1,\ldots,x_n]$ be a reduced polynomial. The following conditions are equivalent:
\begin{enumerate}
\item $\forall P \in \C^n,\, \big[ f(P) = 0 \, \Longrightarrow \, f(\bxi_\bd \cdot P) = 0,\, \forall \bxi_\bd \in \mu_\bd \big]$.
\item $\exists {\bf v} \in \N^r$ such that $f(\bxi_\bd \cdot {\bf x}) = \bxi_\bd^{\bf v} f({\bf x}),\, \forall \bxi_\bd \in \mu_\bd$.
\item $\exists k \geq 1$ such that $f^k := f \cdot \stackrel{(k)}{\ldots} \cdot f : M \to \C$ is a function.
\end{enumerate}
\end{prop}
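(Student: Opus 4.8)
The plan is to prove the cycle of implications $(3) \Rightarrow (2) \Rightarrow (1)$, which are the easy directions, and then close the loop with $(1) \Rightarrow (3)$ (or equivalently $(1) \Rightarrow (2)$), which I expect to be the substantive part. For $(3) \Rightarrow (2)$: if $f^k$ descends to a function on $M = X(\bd;A)$, then $f^k$ is invariant under the $\mu_\bd$-action, i.e.\ $f(\bxi_\bd\cdot{\bf x})^k = f({\bf x})^k$ for all $\bxi_\bd$. Since the map ${\bf x}\mapsto f(\bxi_\bd\cdot{\bf x})/f({\bf x})$ is a regular function on the complement of $\{f=0\}$ (here I use that $f$ is reduced, so this ratio extends across the hypersurface only if it is a constant — a nowhere-vanishing regular function on $\C^n\setminus\{f=0\}$ whose $k$-th power is $1$ is a constant root of unity), one gets $f(\bxi_\bd\cdot{\bf x}) = \lambda(\bxi_\bd) f({\bf x})$ with $\lambda(\bxi_\bd)$ a root of unity. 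The assignment $\bxi_\bd\mapsto\lambda(\bxi_\bd)$ is then a group homomorphism $\mu_\bd \to \C^{*}$, hence a character, and since $\mu_\bd = \mu_{d_1}\times\cdots\times\mu_{d_r}$ every character is of the form $\bxi_\bd\mapsto\bxi_\bd^{\bf v}$ for some ${\bf v}\in\N^r$ (exponents taken mod $d_i$, which we may lift to $\N$); this is $(2)$.

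For $(2)\Rightarrow(1)$: if $f(\bxi_\bd\cdot{\bf x}) = \bxi_\bd^{\bf v} f({\bf x})$ and $f(P) = 0$, then $f(\bxi_\bd\cdot P) = \bxi_\bd^{\bf v}\cdot 0 = 0$, which is exactly $(1)$. Also $(2)\Rightarrow(3)$ is immediate: take $k = \lcm(d_1,\ldots,d_r)$, or more economically the order of the character ${\bf v}$ in $\widehat{\mu_\bd}$; then $f^k(\bxi_\bd\cdot{\bf x}) = \bxi_\bd^{k{\bf v}} f^k({\bf x}) = f^k({\bf x})$, so $f^k$ is $\mu_\bd$-invariant and descends to $M$. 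So it remains only to show $(1)\Rightarrow(2)$.

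For $(1)\Rightarrow(2)$, fix $\bxi_\bd\in\mu_\bd$ and consider $g_{\bxi_\bd}({\bf x}) := f(\bxi_\bd\cdot{\bf x})$, another reduced polynomial. Condition $(1)$ says $\{f=0\}\subseteq\{g_{\bxi_\bd}=0\}$; applying $(1)$ to the inverse element $\bxi_\bd^{-1}$ gives the reverse inclusion, so the two reduced hypersurfaces coincide as sets, hence $g_{\bxi_\bd} = \lambda(\bxi_\bd) f$ for some $\lambda(\bxi_\bd)\in\C^{*}$ by the Nullstellensatz (two reduced polynomials with the same zero set differ by a unit, i.e.\ a nonzero scalar). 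Evaluating the cocycle relation $f((\bxi_\bd\bxi_\bd')\cdot{\bf x}) = f(\bxi_\bd\cdot(\bxi_\bd'\cdot{\bf x}))$ shows $\lambda(\bxi_\bd\bxi_\bd') = \lambda(\bxi_\bd)\lambda(\bxi_\bd')$, so $\lambda\colon\mu_\bd\to\C^{*}$ is a homomorphism from a finite group, hence takes values in roots of unity and is a character; writing it in coordinates on $\mu_{d_1}\times\cdots\times\mu_{d_r}$ produces the vector ${\bf v}\in\N^r$ of $(2)$.

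The main obstacle is the reduction ``equality of zero sets of reduced polynomials implies proportionality,'' i.e.\ the correct application of the Nullstellensatz together with the fact that $f$ is irreducible-free (reduced): one must be careful that $f$ being reduced forces each of its irreducible factors to appear with multiplicity one in both $f$ and $g_{\bxi_\bd}$, and that $g_{\bxi_\bd}$ has the same degree as $f$ (because $\bxi_\bd\cdot{\bf x}$ is a linear diagonal change of coordinates), so the unit relating them is a scalar rather than a polynomial. Once that lemma is in hand, the rest is the formal cocycle-to-character argument, which is routine.
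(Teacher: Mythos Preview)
Your proof is correct and follows essentially the same route as the paper. The paper also singles out $(1)\Rightarrow(2)$ as the only nontrivial implication and proves it via the Nullstellensatz: since $f$ is reduced, $\{f=0\}\subseteq\{f(\bxi_\bd\cdot{\bf x})=0\}$ forces $f(\bxi_\bd\cdot{\bf x})\in\sqrt{(f)}=(f)$, and then a degree comparison makes the quotient a scalar, which is identified as a root of unity by iterating the action. The only cosmetic differences are that the paper works just with generators $\zeta_i$ of each cyclic factor (deducing $h_i^{d_i}=1$ by iteration) rather than with arbitrary $\bxi_\bd$ and the cocycle/character argument, and that it uses a single inclusion plus the degree bound instead of your two-sided inclusion to get equality of zero sets; both variants are equally short. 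One small remark: in your $(3)\Rightarrow(2)$ the reducedness hypothesis is not actually needed---from $f(\bxi_\bd\cdot{\bf x})^k=f({\bf x})^k$ in the domain $\C[{\bf x}]$ one factors $a^k-b^k=\prod_{\zeta^k=1}(a-\zeta b)$ and concludes directly.
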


\begin{proof}
The only non-trivial part is $(1) \Rightarrow (2)$. Define $g_i({\bf x})$ for each $i=1,\ldots,r$ to be the polynomial $g_i({\bf x}) := f((1,\ldots,\zeta_i,\ldots,1)\cdot {\bf x}) = f(\zeta_i \cdot {\bf x})$,
where $\zeta_i$ is a fixed primitive $d_i$-th root of unity. By $(1)$, since $f$ is reduced, one has
$g_i \in IV(f) = \sqrt{f} = \langle f \rangle$.

There exists $h_i\in \C[{\bf x}]$ such that $g_i = h_i f$. Taking degrees the polynomials $h_i$'s must be constants. But,
$$
f({\bf x}) = f( \zeta_i^{d_i} \cdot {\bf x}) = g_i ( \zeta_i^{d_i-1} \cdot {\bf x} )  = h_i \cdot f( \zeta_i^{d_i-1} \cdot {\bf x}) = \cdots = h_i^{d_i} \cdot f({\bf x}).
$$

Hence $h_i = \zeta_i^{v_i}$ for some $v_i \in \N$. Now the vector ${\bf v} = (v_1,\ldots,v_r) \in \N^r$ satisfies $(2)$ and the claim follows.
\end{proof}

This example shows that the reduceness condition in the statement of the previous result is necessary.

\begin{ex}\label{bad_exam}
Let $f = (x^2+y) (x^2-y)^3 \in \C[x,y]$ and consider the cyclic quotient space $M = X(2;1,1)$. Then $\{ f = 0 \} \subset M$ defines a zero-set but there is no $k$ such that $f^k$ is a function over $M$.
\end{ex}

If $f: X(\bd; A) \to \C$ is a well-defined function, using A'Campo's formula, one easily sees that $Z(f^k; t) = Z(f; t^k)$. Therefore, when $f$ is not a function but $f^k$ is, it is natural to define the monodromy zeta function of $f$ as follows 
$$
  Z(f;t) := Z(f^k; t^{\frac{1}{k}}).
$$

One can prove that it is well defined, that is, it does not depend on $k$. Indeed, assume that $f^\l$ also induces a function over $M$, for some $\l\geq 1$. Using B\'{e}zout's identity for $k,l$ one has that $f^{\gcd(k,l)} : M \to \C$ is a function too. Denote $e:=\gcd(k,l)$, $k = k_1 e$, and $\l = \l_1 e$. Then,
$$
  Z(f^k; t^{\frac{1}{k}}) = Z(f^{k_1 e}; t^{\frac{1}{k_1 e}}) = Z(f^e; t^{\frac{1}{e}}) = 
  Z(f^{\l_1 e}; t^{\frac{1}{\l_1 e}}) = Z(f^\l; t^{\frac{1}{\l}}).
$$

The zeta function defined is a rational function on $\C[t^\frac{1}{k}]$, where $k$ is the minimum $\l\geq 1$ such that $f^\l$ is a function over $M$. When $f$ itself is a function, that is $k=1$, then it is a rational function on $\C[t]$ as usual.

The Euler characteristic of the Milnor fiber and the Milnor number are taken by definition as
$$
\chi_f := \deg Z(f;t), \qquad
\mu_f := (-1)^n [-1 + \chi_f ],
$$
where the degree of $t^{i/k}$ is $i/k$. They are in general rational numbers and they verify
$$
  \chi_f = \frac{\chi_{f^k}}{k},\qquad \mu_f = \frac{(-1)^n [1-k] + \mu_{f^k}}{k}.
$$

In this situation, our generalized A'Campo's formula can be applied directly to $f$, that is, without going through $f^k$. Note that in this case, the numbers $m_{i,j}$'s of Theorem~\ref{ATH} are rational numbers.

Let us see an example.

\begin{ex}
Let $f = x^a y^b (x^2 + y^3) \in \C[x,y]$. Consider $M = X(d;p,q)$ not necessarily written in a normalized form but assume $\gcd(d,p,q) = 1$ and $d|(2p-3q)$ hold. Then, $f$ defines a zero-set but does not induce a function over $M$.

Figure~\ref{fig_acampo7} represents an embedded $\Q$-resolution of $\{f=0\} \subset M$ that has been obtained with the blowing-up at the origin of type $\big( \frac{3}{\gcd(d,p)},\frac{2}{\gcd(d,q)} \big)$. The numbers in brackets are the order of the cyclic groups after normalizing and the others are the multiplicities of the corresponding divisors.

\begin{figure}[h t]
\centering\includegraphics{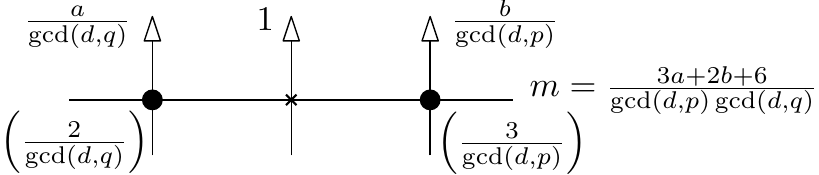}
\caption{Embedded $\Q$-resolution of $\{ x^a y^b (x^2 + y^3) = 0 \} \subset X(d;p,q)$.}
\label{fig_acampo7}
\end{figure}

Hence the monodromy zeta function is $Z(t) = (1-t^m)^{-1}$, $\chi_f = -m$, and the Milnor number is~$\mu_f = m+1$. Here $a,b$ are assumed to be non-zero, since otherwise the singular points of the final total space would also contribute to $Z(f; t)$. Some special values for $\mu_f$ are shown.

\begin{center}
\begin{tabular}{|c|c|c|c|}
\hline
$(d,p,q)$ & $(6,3,2)$ & $(1,-,-)$ & $(6,3,2)$\\
$(a,b)$ & $(2,3)$ & $(1,1)$ & $(1,1)$\\
\hline
$\mu_f$ & 4 & 12 & $17/6$\\
\hline
\end{tabular}
\end{center}

Observe that the first two values correspond to the functions $x y(x+y)$ and $x y(x^2+y^3)$ defining over~$\C^2$.
\end{ex}

\begin{remark}
In the previous example the quotient space $X(d;p,q)$ can be normalized to $X \big(\frac{d}{\gcd(d,p) \gcd(d,q)}, \frac{p}{\gcd(d,p)}, \frac{q}{\gcd(d,q)} \big)$. Under this isomorphism the polynomial $f = x^a y^b (x^2 + y^3)$ is sent to
$$
  x^{\frac{a}{\gcd(d,q)}} \cdot y^{\frac{b}{\gcd(d,p)}} \big( x^{\frac{2}{\gcd(d,q)}} + y^{\frac{3}{\gcd(d,p)}} \big),
$$
which is not a polynomial in general. This seems to force one to work with non-normalized spaces. However, since $d | (2p - 3q)$ and $\gcd(d,p,q) = 1$, then $\gcd(d,q) | 2$ and $\gcd(d,p) | 3$. Thus the previous expression is a polynomial times a monomial with rational exponents.
\end{remark}

This fact is not a coincidence as the following result clarifies. Although it can be stated in a more general setting, to simplify the ideas, we only consider polynomials in two variables over cyclic quotient singularities.

\begin{prop}
Let $d,p,q$ be three integers with $\gcd(d,p,q)=1$. Let $f(x,y) \in \C[x,y]$ such that $f(\xi_d^p x, \xi_d^q y) = \xi_d^v f(x,y)$. If $x\nmid f(x,y)$ and $y\nmid f(x,y)$, then $f(x^{1/\gcd(d,q)},y^{1/\gcd(d,p)})$ is again a polynomial.

In particular, an arbitrary polynomial $g(x,y)$ satisfying $g(\xi_d^p x, \xi_d^q y) = \xi_d^v g(x,y)$, is converted after normalizing $X(d;p,q)$ into a polynomial times a monomial with rational exponents, that is, it can be written in the form
$$
  g \big( x^{\frac{1}{\gcd(d,q)}}, y^{\frac{1}{\gcd(d,p)}} \big) = x^a y^b h(x,y),
$$
where $h(x,y)\in \C[x,y]$ and $a,b \in \mathbb{Q}_{\geq 0}$.
\end{prop}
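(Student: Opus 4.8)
The plan is to read the symmetry hypothesis off the support (Newton set) of $f$ and then combine it with the two divisibility hypotheses. Write $f(x,y)=\sum_{(i,j)}c_{ij}\,x^iy^j$. Since $\xi_d$ is a primitive $d$-th root of unity, comparing coefficients of $x^iy^j$ on both sides of $f(\xi_d^p x,\xi_d^q y)=\xi_d^v f(x,y)$ gives $c_{ij}\big(\xi_d^{pi+qj}-\xi_d^{v}\big)=0$ for every $(i,j)$; equivalently, $pi+qj\equiv v \pmod d$ whenever $c_{ij}\neq 0$. So the support of $f$ is contained in a single coset of the subgroup $\{(i,j)\in\Z^2\mid pi+qj\equiv 0 \pmod d\}$ of $\Z^2$.

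Next I would use that $x\nmid f$ and $y\nmid f$ furnish, respectively, a monomial $c_{0,j_0}\,y^{j_0}$ and a monomial $c_{i_0,0}\,x^{i_0}$ actually occurring in $f$. Plugging these into the congruence yields $qj_0\equiv v$ and $pi_0\equiv v \pmod d$, and subtracting from the general relation gives, for every $(i,j)$ in the support,
$$
pi+q(j-j_0)\equiv 0 \pmod d, \qquad p(i-i_0)+qj\equiv 0 \pmod d.
$$
Set $e_1:=\gcd(d,q)$ and $e_2:=\gcd(d,p)$. Reducing the first congruence modulo $e_1$ and using $e_1\mid q$ leaves $pi\equiv 0 \pmod{e_1}$; any common divisor of $p$ and $e_1$ divides $\gcd(d,p,q)=1$, so $\gcd(p,e_1)=1$ and hence $e_1\mid i$. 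Symmetrically, the second congruence modulo $e_2$ forces $e_2\mid j$. Thus every monomial of $f$ has the shape $x^{e_1 i'}y^{e_2 j'}$, so $f\big(x^{1/e_1},y^{1/e_2}\big)=\sum c_{e_1 i',\,e_2 j'}\,x^{i'}y^{j'}\in\C[x,y]$, which is the first assertion.

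For the ``in particular'' statement, write a general $g$ with $g(\xi_d^p x,\xi_d^q y)=\xi_d^v g(x,y)$ as $g=x^ay^b\,h(x,y)$, where $a,b$ are nonnegative integers and $x\nmid h$, $y\nmid h$. A direct substitution shows $h(\xi_d^p x,\xi_d^q y)=\xi_d^{\,v-pa-qb}\,h(x,y)$, so the case already proved applies to $h$ and $h\big(x^{1/e_1},y^{1/e_2}\big)\in\C[x,y]$; therefore
$$
g\big(x^{1/e_1},y^{1/e_2}\big)=x^{a/e_1}\,y^{b/e_2}\,h\big(x^{1/e_1},y^{1/e_2}\big),
$$
a monomial with exponents in $\mathbb{Q}_{\geq 0}$ times a polynomial, as claimed.

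I do not expect a genuine obstacle: the argument is a short coefficient comparison plus elementary number theory. The only point that needs care is the coprimality bookkeeping $\gcd(p,\gcd(d,q))=1$ (and its mirror image $\gcd(q,\gcd(d,p))=1$), which is exactly where the standing assumption $\gcd(d,p,q)=1$ is used, together with the harmless reduction from $g$ to its ``$x,y$-primitive part'' $h$.
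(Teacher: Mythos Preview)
Your argument is correct and follows essentially the same route as the paper: both proofs extract the monomials $x^{i_0}$ and $y^{j_0}$ from the hypotheses $y\nmid f$ and $x\nmid f$, read off the congruence $pi+qj\equiv v\pmod d$ on the support, and then use $\gcd(d,p,q)=1$ to deduce $\gcd(d,q)\mid i$ and $\gcd(d,p)\mid j$. The only cosmetic difference is that the paper packages the step through the auxiliary product $x^k f$ (with $k\equiv -i_0\pmod d$) being an honest function on $X(d;p,q)$, whereas you work directly with the congruences; your version is arguably a little cleaner.
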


\begin{proof}
Since $y\nmid f(x,y)$, there exists $k'\geq 0$ such that $x^{k'}$ is a monomial of~$f$. The action is diagonal and does not change the form of the monomials. Hence $x^{k'}$ has the same behavior with respect to the action as $f$, that is, $\xi_d^{k'p} x^{k'} = \xi_d^v x^{k'}$. This implies that $d | (k'p-v)$. Take $k\geq 0$ such that $k\equiv -k'$ modulo $d$.

Now $x^k f(x,y) : X(d; p,q) \to \C$ is a function with $x\nmid f(x,y)$. Then $\gcd(d,q) | k$ and $f(x^{1/\gcd(d,q)},y)$ is a polynomial.

By symmetry $f(x, y^{1/\gcd(d,p)})$ is a polynomial too and the proof is complete.
\end{proof}

As for weighted projective plane, let $F\in \C[x,y,z]$ be a $(p,q,r)$-quasi-homogeneous polynomial with $\gcd(p,q,r) = 1$. The monodromy zeta function of $F(x,y,z)$ at a point of the form $[a:b:1]$ is defined by
$$
  Z \big( F(x,y,z),[a:b:1];\, t \big) := Z \big( f(x,y,1),(a,b);\, t \big).
$$
Note that $f(\xi_r^p x, \xi_r^q, 1) = \xi_r^{\deg f} f(x,y,1)$ and thus $f(x,y,1)$ satisfies the conditions of Proposition~\ref{prop_not_well-defined}(2), where the quotient space is simply $M=X(r;p,q)$. Therefore the previous expression equals
$$
Z(f(x,y,1)^r, (a,b);\, t^{1/r}).
$$

Analogously one defines the zeta function at every point of $\P^2(p,q,r)$ and one sees that it is independent of the chosen chart. This can be generalized to spaces like $\P^n_\w/\mu$, where $\mu$ is an abelian finite group acting diagonally as usual.

To define the monodromy zeta function for polynomials defining a zero-set but there is no $k$ such that $f^k$ is a function over the quotient space, one could use A'Campo's formula and try to prove that the rational function obtained is independent of the chosen embedded $\Q$-resolution. We do not insist on the veracity of this fact because it is not the purpose of this work.

\begin{ex}
We continue here with Example~\ref{bad_exam}. Blowing up the origin of $X(2;1,1)$ with weights $(1,2)$, an embedded $\Q$-resolution of $\{f=0\} \subset X(2;1,1)$ is computed and it make sense to define the zeta function using this resolution.
\begin{figure}[h t]
\centering\includegraphics{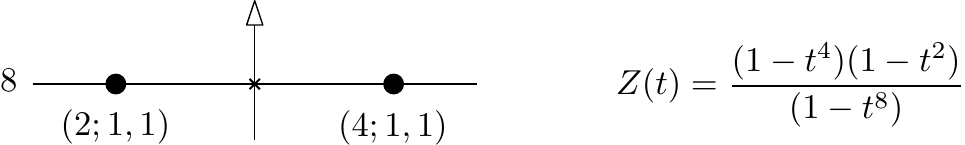}
\caption{Embedded $\Q$-resolution of $\{ (x^2  + y) (x^2 - y)^3 = 0 \} \subset X(2;1,1)$ and its monodromy zeta function.}
\end{figure}
\end{ex}

\section{Why Abelian? $D_4$ as a Quotient Singularity}\label{sec_non-abelian}

All over the paper, the ambient space $X$ is assumed to be $\C^{n}/G$, where $G$ is an abelian finite subgroup of $GL(n, \C)$. In this final part, using $D_4$ as a quotient singularity, it is exemplified the behavior for non-abelian groups. As we shall see, double points in an embedded $\Q$-resolution of a well-defined function $f: X \to \C$ contributes in general to its monodromy zeta function. In this sense abelian groups are the largest family for which Theorem~\ref{ATH} applies.

Let $\C^2$ with coordinate $(x,y)$ and consider the subgroup of $GL(2,\C)$ generated by the matrices
$$
  A = \begin{pmatrix} i & 0 \\ 0 & -i \end{pmatrix}, \qquad
  B = \begin{pmatrix} 0 & -1 \\ 1 & 0 \end{pmatrix}.
$$
Thus $A^2 = B^2 = (AB)^2 = -Id_2$. This group of order $8$, often denoted by ${\rm BD}_8$, is called the {\em binary dihedral group}. The quotient singularity $\C^2 / {\rm BD}_8$ is denoted by $D_4$.

Let us compute the zeta function of $f := (x y)^m : D_4 \to \C$, where $m$ is an even positive integer so that the map is well defined. Consider $\pi: \widehat{\C}^2 \to \C^2$ the usual blow-up at the origin. The action ${\rm BD}_8$ on $\C^2$ extends naturally to an action on $\widehat{\C}^2$ such that the induced map
$\bar{\pi}\,:\, \widehat{\C}^2 / {\rm BD}_8 \rightarrow \C^2 / {\rm BD}_8 =: D_4$
defines an embedded $\Q$-resolution of $\{ f = 0 \} \subset D_4$.

More precisely, there are three quotient singular points all of them of type $(2;1,1)$ located at the exceptional divisor. They correspond to the points~$[0:1]$, $[1:1]$, $[i:1]$ $\in \P^1 / {\rm BD}_8$. The strict transform intersects transversally the exceptional divisor at $P:=((0,0),[0:1])$ and the equation of the total transform at this point is given by $ x^m y^m : X(2;1,1) \rightarrow \C$, see Figure~\ref{fig_acampo9}.

\begin{figure}[h t]
\centering\includegraphics{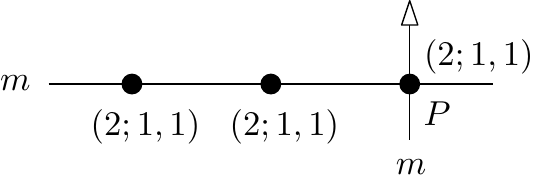}
\caption{Embedded $\Q$-resolution of $\{ (xy)^m = 0 \} \subset D_4$.}
\label{fig_acampo9}
\end{figure}

From Theorem~\ref{ATH}, the monodromy zeta function of $f$ and the Euler characteristic of the Milnor fiber are
$$
Z(t) = \frac{(1-t^{m/2})^2}{1-t^m} = \frac{1-t^{m/2}}{1+t^{m/2}}, \qquad
\chi (F) = \deg Z(t) = 0.
$$
In particular, $Z(t)$ is not trivial although $f$ defines a ``double point'' on $D_4$, as claimed.


\begin{center}
\begin{large}
\textbf{Conclusion and Future Work}
\end{large}
\end{center}

The combinatorial and computational complexity of embedded $\mathbf{Q}$-reso\-lutions is much simpler than the one of the classical embedded resolutions, but they keep as much information as needed for the comprehension of the topology of the singularity. This will become clear in the author's PhD thesis. We will prove in a forthcoming paper another advantages of these embedded $\mathbf{Q}$-resolutions, e.g.~in the computation of abstract resolutions of surfaces via Jung method, see~\cite{AMO11a, AMO11b}.

\def\cprime{$'$}

\end{document}